\numberwithin{equation}{section}
\numberwithin{figure}{section}
\theoremstyle{plain}
\newtheorem{thm}{\protect\theoremname}
  \theoremstyle{remark}
  \newtheorem{rem}[thm]{\protect\remarkname}
  \theoremstyle{remark}
  \newtheorem{example}[thm]{\protect\examplename}
  \theoremstyle{definition}
  \newtheorem{defn}[thm]{\protect\definitionname}
  \theoremstyle{plain}
  \newtheorem{prop}[thm]{\protect\propositionname}
  \theoremstyle{plain}
  \newtheorem{lem}[thm]{\protect\lemmaname}
   \newtheorem{cor}[thm]{\protect\corname}
\def\e{{\epsilon}}
\def\G{{\mathcal{G}}}
\def\g{{\mathfrak{g}}}
\def\Ts{{T^*}}
\def\R{\mathbb{R}}
\def\id{\operatorname{id}}
\def\sym{\operatorname{sym}}
\def\e{\epsilon}
\def\t{\tau}
\def\R{\mathbb{R}}
  \providecommand{\definitionname}{Definition}
  \providecommand{\examplename}{Example}
  \providecommand{\lemmaname}{Lemma}
  \providecommand{\propositionname}{Proposition}
  \providecommand{\remarkname}{Remark}
\providecommand{\theoremname}{Theorem}
\providecommand{\corname}{Corollary}
\begin{document}
\selectlanguage{english}

\title{formal symplectic realizations}

\author{Alejandro Cabrera and Benoit Dherin}

\address{A. C.: Departamento de Matematica Aplicada, Instituto de Matematica\\
Universidade Federal do Rio de Janeiro, CEP 21941-909, Rio de Janeiro - RJ \\ Brazil.}
\email{acabrera@labma.ufrj.br}

\address{B.D.: Department of Mathematics, University of California,
Berkeley, CA 94720-3840, USA}
\email{dherin@math.berkeley.edu}

\begin{abstract}
We study the relationship between several constructions of symplectic
realizations of a given Poisson manifold. Our main result is a general formula for a formal symplectic realization in the case of an arbitrary Poisson structure on
$\R^n$. This formula is expressed in terms of rooted trees and elementary differentials, building on the work of Butcher, and the coefficients are shown to be a generalization of Bernoulli numbers appearing in the linear Poisson case. We also show that this realization coincides with  a formal version of the original construction of Weinstein, when suitably put in global Darboux form, and with the realization coming from tree-level part of Kontsevich's star product. We provide a simple iterated integral expression for the relevant coefficients and show that they coincide with underlying Kontsevich weights.
\end{abstract}
\maketitle
\tableofcontents{}

\section{Introduction}

A symplectic realization of a Poisson manifold $(M,\pi)$ is a Poisson
map from a symplectic manifold $(S,\omega)$ to $(M,\pi)$, which
is also a surjective submersion. Symplectic realizations are very
natural objects in Poisson geometry from the point of view of integration
and quantization theory of Poisson manifolds.

Namely, following the standard ``quantization dictionary'' that
associates a quantum algebra $\mathcal{A}_{\pi}$ of observables with
a Poisson manifold $(M,\pi)$ (deformation quantization) and a Hilbert
space $\mathcal{H}_{\omega}$ of states with a symplectic manifold
$(S,\omega)$ (geometric quantization), a symplectic realization should
quantize to a representation of $\mathcal{A}_{\pi}$ on $\mathcal{H}_{\omega}$,
which is the object encoding the symmetries at the quantum level (see
\cite{Xu}).
From the perspective of global integration of Poisson manifolds, symplectic
realizations naturally arise as source maps of symplectic groupoids
$(S,\omega)\rightrightarrows(M,\pi)$, which are the objects that
integrate Poisson manifolds. Symplectic realizations contain a lot
of information about integrability. For instance, Crainic and Fernandes
showed in \cite{CF2004} that a Poisson manifold is globally integrable
if and only if it admits a complete symplectic realization. However,
in the global integration case, the symplectic realization does not
carry all the information about the integrating symplectic groupoid
(some extra data is needed to define a global inverse map for instance).

The situation is different for the local/formal integration of Poisson
manifolds by local/formal symplectic groupoids. Recall that a local
symplectic groupoid is, roughly, the structure obtained by restricting
a global symplectic groupoid to a neighborhood of its unit space,
while a formal symplectic groupoid is the structure obtained by taking
$\infty$-jets of the symplectic groupoid structure maps (or their
pullbacks as in \cite{Karabegov}) at the unit space.
In the local/formal case, all the structure maps of the local/formal
symplectic groupoid, including the inverse map, can be recovered from
the source map alone (\cite{Weinstein83}), i.e., from a local/formal
symplectic realization. Another simplification in this case is that
the domain of the symplectic realization can always be taken to be
the cotangent bundle of the Poisson manifold, or, more precisely,
a local/formal neighborhood of its zero section.

Given a Poisson manifold $(M,\pi)$, the problem of constructing a
local/formal symplectic realization can essentially be tackled in
two ways, both of which start by considering a deformation $\epsilon\pi$
of the zero Poisson structure on $M$ by a parameter $\epsilon$.
For the zero Poisson structure, the canonical bundle projection $q:T^{*}M\rightarrow M$
is a symplectic realization, where the symplectic form on the cotangent
bundle is the canonical one $\omega_{0}$.

The first way is to try to deform $\omega_{0}$ into another symplectic
form $\omega_{\epsilon}$ such that the bundle projection $q$ remains
a symplectic realization from $(T^{*}M,\omega_{\epsilon})$ (or from
a neighborhood of its zero section) to $(M,\epsilon\pi)$. This approach
was the original one of Weinstein in \cite{Weinstein83}, who gave
an integral formula for $\omega_{\epsilon}$ and proved the result
in the case $M=\R^{d}$. The result for a general manifold has been
proven recently by Crainic and Marcut in \cite{CM 10}.

The second way is the approach of Karasev in \cite{Karasev} that
keeps the canonical symplectic form $\omega_{0}$ fixed and deforms
the bundle projection into a symplectic realization $q_{\epsilon}:(T^{*}M,\omega_{0})\rightarrow(M,\epsilon\pi)$.
The formal expansion of $q_{\epsilon}$ in the case $M=\R^{d}$ has
been shown in \cite{UnivGen} to coincide with the formal symplectic
realization that one can extract (see \cite{FSG}) from the tree-level
part of the Kontsevich star-product given in \cite{Kontsevich}.

The results of this paper can be summarized as follows: in the case $M=\R^{d}$, we establish the explicit relationship between the above two constructions (Proposition \ref{prop: We-Kar} and
Theorems \ref{thm: formal canonical flat realization}, \ref{thm:realizations}) and  we provide an explicit formula for the formal Karasev symplectic realization (Theorems \ref{thm:alpha_c_form} and \ref{thm:iterformula}). This explicit formula generalizes the known expression
\[q_\e(x,p) =  \sum_{n\geq 0} \frac{B_n}{n!} (-\e \ ad^*_p)^n (x), \ B_n: \ Bernoulli \ numbers,\]
valid for a linear Poisson structures on the dual of a Lie algebra $\g^*\simeq \R^q$, to the non-linear case. In the non-linear case, the underlying combinatorics is more complicated and we resort to Butcher's techniques \cite{Bu} involving rooted trees and elementary differentials. 

The outline of the paper is as follows.
In Section \ref{sec:General-results}, we review the construction
of $\omega_{\epsilon}$ as considered in \cite{CM 10,Weinstein83},
and we give a canonical way to obtain a Karasev-like realization $q_{\epsilon}$
of any Poisson manifold out of the deformed symplectic form $\omega_{\epsilon}$
by putting it in global Darboux form.

The main results are contained in Section \ref{sec:exp_form}. We prove that the formal Karasev realization on $M=\R^d$ admits the following formula 
\[ \alpha^i_\e(x,p) = x^i + \sum_{t \in [RT]} \frac{\e^{|t|}}{\sigma(t)} \ c_t \ D^i_t\overline{V}.\]
Here, $[RT]$ is the set of topological rooted trees, $c_{t}/\sigma(t)$ are coefficients generalizing the $B_n/n!$ of the linear case formula and $D_{t}^{i}\overline{V}$ is the elementary differential (see Definition \ref{def:elemdiff})
associated with $t$ and the \emph{Poisson spray} vector field $\overline{V}=-\pi^{ij}(x)p_{i}\partial_{x^{j}}$. We provide two ways of computing the coefficients: one based on the \emph{Butcher group} structure (c.f. eqs. \eqref{eq:ct} and \eqref{eq:ctrec}) and another one based on recursively defined iterated integrals: $c_t = (-1)^{|t|}I_t(0)$, with 
\[
I_{t}(\theta)=\int_{0}^{1}d\bar{\lambda}\int_{\theta}^{\bar{\lambda}}d\bar{\theta}\; I_{t_{1}}(\bar{\theta})\cdots I_{t_{m}}(\bar{\theta}), \ \ t=[t_1,..,t_m].
\]
(See Theorem \ref{thm:iterformula} for details.)

Finally, in Section \ref{sec:Comparison}  we review the formal (\emph{Kontsevich}) realization
$$s_{K}(p,x)  =  \frac{\partial S_{\frac\pi2}}{\partial p_{2}}(p,0,x),$$
where $S_{\frac\pi2}$ is the generating function (extracted from the Kontsevich
star-product) of the formal symplectic groupoid constructed in \cite{FSG} that integrates the Poisson manifold $(\R^d, \epsilon\pi)$. We prove that it coincides with the Karasev realization (and thus also with the Weinstein realization when suitably put in Darboux form) and that the underlying coefficients involving Kontsevich weights/graphs coincide with those given by the iterated integrals $I_t$ coming from the Butcher group approach (building on results of Kathotia \cite{Kathotia}).

\subsubsection*{Acknowledgments}
We thank Alberto S. Cattaneo for useful remarks and suggestions. 
B.D. acknowledges partial support FAPESP grants 2010/15069-8 and 2010/19365-0,
as well as the hospitality of the UC Berkeley mathematics department
and S\~ao Paulo University ICMC. A.C. also thanks ICMC-USP and FAPESP for support and hospitality during the elaboration of this paper. The authors also thank an anonymous referee for helping improve this paper.

\section{General results\label{sec:General-results}}

In this section, we review the symplectic realization $q:(T^{*}M,\omega_{\epsilon})\rightarrow(\R^{d},\epsilon\pi)$
considered originally by Weinstein in \cite{Weinstein83} and, more
recently, by Crainic and Marcut in \cite{CM 10}. We explain how it
can be constructed out of a Poisson spray, and we show how to put
this realization in ``global Darboux form'' to obtain a Karasev-like
realization $q_{\epsilon}:(T^{*}M,\omega_{0})\rightarrow(\R^{d},\epsilon\pi)$.
We end up by further analyzing the case $M=\R^{d}$, for which we
also consider a formal version of the constructions.

\subsection{Symplectic realizations from Poisson sprays\label{sub:Symplectic-realizations-from-sprays}}

Let $(M,\pi)$ be a Poisson manifold and $q:T^{*}M\to M$ its cotangent
bundle. Throughout the paper $V\in\mathcal{X}(T^{*}M)$ will denote
a Poisson spray for $\pi$, namely, a vector field on $T^{*}M$ satisfying:
\begin{enumerate}
\item $T_{\xi}q(V|_{\xi})=\pi^{\sharp}(\xi)$ for all $\xi\in T^{*}M$ 
\item $m_{t}^{*}V=tV$, for $m_{t}:T^{*}M\to T^{*}M$ being the diffeomorphism
obtained by fiberwise multiplication by $t\neq 0$ \end{enumerate}
\begin{rem}
Notice that $V_{\e}:=m_{\e}^{*}V=\e V$ is thus a spray for the re-scaled
Poisson structure $\e\pi$.
\end{rem}
It follows from the definition that the flow $\varphi_{s}^{V}$ of
$V$ fixes the zero section and, thus, that there exists an open neighborhood
$U_{1}\subset T^{*}M$ of the zero section such that $\varphi_{s}^{V}$
is defined for $s\in[0,1]$%
\footnote{More generally, the results below also hold for any other vector field
$V$ satisfying $(1)$ and such that there is a neighborhood of the
zero section on which the flow is defined on $[0,1]$.%
}.
\begin{example}
\label{ex: spray from nabla} A linear connection $\nabla$ on $q:T^{*}M\to M$
defines a Poisson spray by setting, for $\xi\in T^{*}M$, 
\[
V_{\xi}^{\nabla}=Hor^{\nabla}(\pi(\xi))
\]
 
\end{example}
For each $\e\in[0,1]$, we choose an open neighborhood $U_{\e}\subset T^{*}M$
of the zero section such that $\varphi_{s}^{V}$ is defined for $s\in[0,\e]$.
We can take $U_{1}\subset U_{\e}$ for all $\e\in[0,1]$. Following
\cite{CM 10}, we consider the following differential forms on $U_{\e}$

\begin{eqnarray}
\omega_{V,\e}: & = & \frac{1}{\e}\int_{0}^{\e}ds\left(\varphi_{s}^{V}\right)^{*}\omega_{0}=d\theta_{V,\e},\label{eq: def omega_V}\\
\theta_{V,\e} & := & \frac{1}{\e}\int_{0}^{\e}ds\left(\varphi_{s}^{V}\right)^{*}\theta_{0},\nonumber 
\end{eqnarray}
 where $\theta_{0}$ is the Liouville one form on $T^{*}M$ and $\omega_{0}=d\theta_{0}$
the associated symplectic form%
\footnote{Notice the sign convention $\omega_{0}\simeq dp_{i}\wedge dx^{i}=-dx^{i}\wedge dp_{i}$.%
}.

As explained in \cite{CM 10}, when evaluated at points of the zero section $0^{T^{*}M}\subset T^{*}M,$
$$\omega_{V,\e}|_{0^{T^{*}M}}=\omega_{0}-\e\pi(P_{T^*M}(\cdot),(P_{T^*M}(\cdot))$$ is non degenerate for all $\e$, where $P_{T^*M}:T|_{0^{T^*M}}(T^*M) \to T^*M$ is the natural projection. It thus
follows that we can choose possibly smaller neighborhoods $U(V)_{\e}\subset U_{\e}$
where this 2-form is non-degenerate. Then, $\omega_{V,\e}$ is a (exact)
symplectic form on this $U(V)_{\e}$.
\begin{thm}
\label{thm: omega V realization} The bundle projection $q:(U(V)_{\e}\subset T^{*}M,\omega_{V,\e})\to(M,-\e\pi)$
is a symplectic realization.
\end{thm}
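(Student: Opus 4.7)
The map $q$ is a surjective submersion because it is the restriction of the canonical cotangent bundle projection to the open neighborhood $U(V)_{\e}$ of the zero section, and $\omega_{V,\e}$ is symplectic on $U(V)_{\e}$ by the preceding discussion. The only nontrivial content is therefore the Poisson map identity
\[
\{q^{*}f, q^{*}g\}_{\omega_{V,\e}} = -\e\{f,g\}_{\pi} \circ q, \qquad f,g \in C^{\infty}(M),
\]
or equivalently that, for every $f \in C^{\infty}(M)$, the Hamiltonian vector field $Y_{f} := X^{\omega_{V,\e}}_{q^{*}f}$ defined by $\iota_{Y_{f}}\omega_{V,\e} = q^{*}df$ is $q$-related to $-\e\pi^{\sharp}(df)$.

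My first step is to verify this along the zero section using the explicit expression $\omega_{V,\e}|_{0^{T^{*}M}} = \omega_{0} - \e\pi(P(\cdot), P(\cdot))$ given above. In the canonical splitting $T_{0_{x}}(T^{*}M) \cong T_{x}M \oplus T_{x}^{*}M$, this 2-form is represented by an explicit block matrix whose inverse (the Poisson bivector at $0_{x}$) has $T_{x}M \wedge T_{x}M$-component exactly $-\e\pi_{x}$, by a direct block inversion. Contracting with the basic 1-forms $q^{*}df|_{0_{x}}$ and $q^{*}dg|_{0_{x}}$, which lie in the $T_{x}^{*}M$-factor dual to the horizontal $T_{x}M$-factor, immediately yields $\{q^{*}f, q^{*}g\}_{\omega_{V,\e}}(0_{x}) = -\e\{f,g\}_{\pi}(x)$, and correspondingly $T_{0_{x}}q(Y_{f}(0_{x})) = -\e\pi^{\sharp}_{x}(df)$, which establishes the Poisson identity along the zero section.

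To extend the identity to a neighborhood, I would construct $Y_{f}$ explicitly by averaging along the spray flow: an $s$-average, over $s \in [0,\e]$, of vector fields built from the canonical Hamiltonian lift $X^{\omega_{0}}_{q^{*}f}$ via pullback or pushforward by $\varphi_{s}^{V}$, and then verify both the defining equation and the projection formula $Tq \circ Y_{f} = -\e\pi^{\sharp}(df) \circ q$ by a Cartan-calculus computation. The spray condition $T_{\xi}q(V|_{\xi}) = \pi^{\sharp}(\xi)$ identifies the horizontal part of the integrand, while the fiberwise homogeneity $m_{t}^{*}V = tV$ controls its Taylor expansion transverse to the zero section so that integration over $[0,\e]$ produces the overall factor $\e$. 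The main obstacle is precisely this second step: the defining equation $\iota_{Y_{f}}\omega_{V,\e} = q^{*}df$ is linear in $\omega_{V,\e}$ and so compatible with the averaging, but $Tq(Y_{f})$ depends nonlinearly on $Y_{f}$, so matching the two conditions off the zero section requires the full structure of $V$ as a spray (rather than merely as a vector field projecting to $\pi^{\sharp}$ along $0^{T^{*}M}$).
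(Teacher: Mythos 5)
Your first step is fine as far as it goes, but it does not carry any of the weight of the theorem. The computation of $\omega_{V,\e}$ at points of $0^{T^{*}M}$ and the block inversion only establish the Poisson identity \emph{along the zero section}; this is essentially the same observation the paper already quotes (the formula $\omega_{V,\e}|_{0^{T^{*}M}}=\omega_{0}-\e\pi(P_{T^*M}(\cdot),P_{T^*M}(\cdot))$ used for non-degeneracy), and at the zero section almost any sensible candidate form would pass this test. The actual content of the statement is that $q$ is Poisson on a whole neighborhood $U(V)_{\e}$, and for that part you offer only a plan ("I would construct $Y_f$ by averaging along the spray flow\dots") together with an explicit admission that the matching off the zero section is an unresolved obstacle. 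So the proposal is not a proof; the decisive step is missing.

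Moreover, the averaging strategy as described is unlikely to close the gap. The field you need is $Y_{f}=\omega_{V,\e}^{-1}(q^{*}df)$, and inversion of a $2$-form is not linear: even if for each $s$ you produce a field $Y_{f,s}$ with $\iota_{Y_{f,s}}\bigl((\varphi_{s}^{V})^{*}\omega_{0}\bigr)=q^{*}df$, the $s$-average of the $Y_{f,s}$ does not satisfy $\iota_{Y}\omega_{V,\e}=q^{*}df$, because the contraction is bilinear in the pair (field, form). Worse, pushing the canonical Hamiltonian lift $X^{\omega_{0}}_{q^{*}f}$ through $\varphi_{s}^{V}$ yields the $(\varphi_{s}^{V})^{*}\omega_{0}$-Hamiltonian field of $(\varphi_{s}^{V})^{*}q^{*}f$, not of $q^{*}f$, and $q\circ\varphi_{s}^{V}\neq q$, so you cannot even arrange the contraction identity summand by summand. (A small slip in your discussion: $Tq(Y_{f})$ is linear in $Y_{f}$; the nonlinearity sits in the dependence of $Y_{f}$ on $\omega_{V,\e}$.) Note finally that the paper itself gives no argument for this theorem: it cites Crainic--Marcut \cite{CM 10}, whose proof exploits the spray structure globally (compare the paper's remark identifying $\omega_{V,\e}$ with $\exp_{V_{\e}}^{*}\Omega_{\e}$ for the local symplectic groupoid built from cotangent paths, whose source map is automatically a realization). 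A self-contained proof would have to reproduce an argument of that kind, not a zero-section check plus flow averaging.
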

This was proven in \cite{CM 10}.

\begin{rem}\label{rmk:paths}
Consider the path construction of the (local) symplectic groupoid
$(\G_{\e},\Omega_{\e})\rightrightarrows M$ integrating $(M,\e\pi)$
in terms of (small) cotangent paths modulo cotangent homotopies as
in \cite{CF2001} and \cite{CF2004}. For each initial condition, close enough to the
zero section in $T^{*}M$, the flow of $V_{\e}$ produces a particular
cotangent path $[0,1]\to T^{*}M$, defining in this way an exponential
map $\exp_{V_{\e}}:U_{1}\to\G_{\e}$. The symplectic form above arises
as $\omega_{V_{\e},1}=\exp_{V_{\e}}^{*}\Omega_{\e}=\omega_{V,\e}$.
The content of the above theorem can then be understood as follows:
the symplectomorphism $\exp_{V_{\e}}$ takes the source map of $\G_{\e}$,
which is a symplectic realization (as for any symplectic groupoid),
to the projection $q$. 
\end{rem}

\begin{rem}
\label{lem: change nabla}\emph{(Change} \emph{of spray)}. Let $(M,\pi)$
be a Poisson manifold and consider two Poisson sprays $V_{1}$ and
$V_{2}$. Then, uniqueness of symplectic realizations (see \cite{CDW1987} and the discussion below)
together with Theorem \ref{thm: omega V realization} imply the existence of a symplectomorphism
$F_{\e}:(U(V_{1})_{\e},\omega_{V_{1},\e})\to(U(V_{2})_{\e},\omega_{V_{2},\e})$
such that $ $$q\circ F_{\e}=q$.
\end{rem}

\subsection{\label{sub:Symplectic-realizations-in-Darboux}Symplectic realizations
in Darboux form}

 First, notice that the realization given by $(U(V)_{\e}\subset T^{*}M,\omega_{V,\e},q)$ is \emph{strict} in the sense of \cite{CDW1987}, since it admits a global Lagrangian section. It is shown in \cite{CDW1987} 
 that strict symplectic realizations with $1$-connected fibers are essentially unique. More precisely, symplectic realizations inherit a natural additional compatible (local) Lie groupoid structure (c.f. Remark \ref{rmk:paths}) and there is always a unique isomorphism between two such local symplectic groupoids over the same Poisson manifold (yielding an analogue of Lie's first theorem). 
 
 It then follows that two strict symplectic realizations $(S_i,\omega_i,J_i), i=1,2$ of the same Poisson manifold with $1$-connected fibers are always isomorphic: there is a symplectomorphism $\phi:(S_1,\omega_1)\to (S_2,\omega_2)$ such that \[J_2\circ \phi = J_1.\] This isomorphism, though, is not unique if one only considers the realization structure and all such $\phi$'s are parameterized by automorphisms of the $(S_i,\omega_i,J_i), i=1,2$ acting on the left and on the right, respectively. In what follows we will focus on the construction of particular isomorphisms relating symplectic realizations of certain special forms.

\begin{defn}
\label{def: psi_V global darboux} We say that a symplectic realization
$\alpha:(W,\Omega)\to(M,\pi)$ is in \emph{Darboux form} if $W\subset T^{*}M$
is an open neighborhood of the zero section and $\Omega=\omega_{0}|_{W}$.
Given a realization $r:(S,\Omega)\to(M,\pi)$, a \emph{global Darboux
frame} for it consists of a diffeomorphism $\Phi:W\subset T^{*}M\to S$
such that $\Phi^{*}\Omega=\omega_{0}$.
\end{defn}
When a global Darboux frame for $(S,\Omega)$ exists, the composition
of Poisson maps 
\[
\alpha=r\circ\Phi:(W,\omega_{0})\to(M,\pi)
\]
yields an \emph{induced realization in Darboux form}. 
By the Lagrangian embedding theorem, strict realizations always admit global Darboux frames.

Going back to our family of symplectic forms $\omega_{V,\e}$, we observe that they are exact
for all $\e$ and that $\omega_{V,\e=0}=\omega_{0}$. 
%
For completeness, we now show that, for each $\e$, there is a natural
global Darboux frame for $\omega_{V,\e}$ attached to the given data
$(M,\pi,V)$ coming from a Moser-type argument.
\begin{prop}
\label{cor: alpha for any M}Let $(M,\pi)$ be a Poisson manifold
and $V$ a Poisson spray. Consider the ($\e$-dependent) vector field
$X_{\e}^{V}$ on $U(V)_{1}$ defined by 
\begin{equation}
i_{X_{\e}^{V}}\omega_{V,\e}=-\frac{d}{d\e}\theta_{V,\e}\label{eq:symplecto}
\end{equation}
Then, there exists an open neighborhood $W\subset U(V)_{1}\subset T^{*}M$
of the zero section such that the flow $\Phi_{V,\e}$ of $X_{\e}^{V}$
is defined on $W$ for $\e\in[0,1]$ and $\Phi_{V,\e}^{*}\omega_{V,\e}=\omega_{0}$
for all $\e\in[0,1]$.\end{prop}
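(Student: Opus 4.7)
The plan is a standard Moser-type argument applied to the exact symplectic family $\omega_{V,\e} = d\theta_{V,\e}$, which starts at $\omega_{V,0}=\omega_0$. First I would observe that the vector field $X_\e^V$ is well-defined because $\omega_{V,\e}$ is non-degenerate on $U(V)_1$ (after possibly shrinking $U(V)_1$ to be contained in $U(V)_\e$ for each $\e\in[0,1]$, using continuity of $\omega_{V,\e}$ in $\e$ and its non-degeneracy at $\e=0$). The defining equation (\ref{eq:symplecto}) uniquely determines $X_\e^V$, and it depends smoothly on $(\e,\xi)$ because $\theta_{V,\e}$ does — note that after the change of variables $s = u\e$ one has $\theta_{V,\e} = \int_0^1 (\varphi_{u\e}^V)^*\theta_0\,du$, which extends smoothly to $\e=0$ with value $\theta_0$.

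Next, I would verify the Moser computation directly. Assuming the flow $\Phi_{V,\e}$ of $X_\e^V$ is defined, Cartan's formula gives
\[
\frac{d}{d\e}\bigl(\Phi_{V,\e}^*\omega_{V,\e}\bigr) = \Phi_{V,\e}^*\!\left( d\, i_{X_\e^V}\omega_{V,\e} + d\tfrac{d}{d\e}\theta_{V,\e}\right) = \Phi_{V,\e}^*\, d\!\left( i_{X_\e^V}\omega_{V,\e} + \tfrac{d}{d\e}\theta_{V,\e}\right),
\]
and this vanishes identically by (\ref{eq:symplecto}). Combined with $\Phi_{V,0}=\id$ and $\omega_{V,0}=\omega_0$, this yields $\Phi_{V,\e}^*\omega_{V,\e}=\omega_0$ for all $\e\in[0,1]$ on the domain where $\Phi_{V,\e}$ exists.

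The remaining step — and the main technical point — is to produce a neighborhood $W$ of the zero section on which the flow is defined for the full interval $\e\in[0,1]$. The key observation is that $X_\e^V$ vanishes along $0^{T^*M}$. Indeed, the zero section is preserved by $\varphi_s^V$ (fixed point-wise) and $\theta_0$ vanishes on $0^{T^*M}$ (since every tangent vector at a zero-section point is mapped by $\xi=0$ to $0$), so $\theta_{V,\e}|_{0^{T^*M}}=0$ for each $\e$ and hence $\tfrac{d}{d\e}\theta_{V,\e}|_{0^{T^*M}}=0$; non-degeneracy of $\omega_{V,\e}$ then forces $X_\e^V$ to vanish on $0^{T^*M}$. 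Thus every point of $0^{T^*M}$ is an equilibrium of the (time-dependent) ODE defined by $X_\e^V$.

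From here the construction of $W$ is routine: by smoothness of $X_\e^V$ in $(\e,\xi)$ and continuous dependence on initial data, each point $m\in 0^{T^*M}$ admits an open neighborhood $W_m\subset U(V)_1$ on which the flow $\Phi_{V,\e}$ is defined for all $\e\in[0,1]$ (start from the trivial solution at the equilibrium and perturb). Setting $W = \bigcup_{m\in 0^{T^*M}} W_m$ gives the required open neighborhood of the zero section, and the Moser computation above finishes the proof. The main obstacle is thus not the symplectic computation — which is algebraically immediate — but organizing this last gluing step in the non-compact setting, handled by invoking the equilibrium nature of $0^{T^*M}$ rather than any compactness argument.
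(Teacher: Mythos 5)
Your proposal is correct and follows essentially the same route as the paper: the vanishing of $\theta_{V,\e}$ (hence of $X_{\e}^{V}$) along the zero section guarantees the flow $\Phi_{V,\e}$ exists on a neighborhood $W$ for all $\e\in[0,1]$, and the standard Moser computation together with the initial condition $\omega_{V,0}=\omega_{0}$ gives $\Phi_{V,\e}^{*}\omega_{V,\e}=\omega_{0}$. The extra details you supply (smoothness in $\e$ via the substitution $s=u\e$ and the gluing of flow domains along the equilibrium set) only flesh out steps the paper treats as standard.
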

\begin{proof}
Notice that the forms $\omega_{V,\e}$ and $\theta_{V,\e}$ are well
defined on $U(V)_{1}$ for all $\e\in[0,1]$ and that they define
a smooth $\e$-family of differential forms on $U(V)_{1}$. The vector
field $X_{\e}^{V}$ defined in eq. \eqref{eq:symplecto} satisfies
$X_{\e}^{V}|_{0^{T^{*}M}}=0$ for all $\e$ since, when evaluated
at points of the zero section, $\theta_{V,\e}|_{0^{T^{*}M}}=0 \forall\e$.
Then, there exists an open neighborhood $W\subset U(V)_{1}$ on which
the flow $\Phi_{V,\e}$ is defined for $\e\in[0,1]$. Finally, a standard
computation shows that 
\begin{eqnarray*}
\frac{d}{d\e}\left(\Phi_{V,\e}^{*}\omega_{V,\e}\right) & = & \Phi_{V,\e}^{*}\left(L_{X_{\e}^{V}}\omega_{V,\e}\right)+\Phi_{V,\e}^{*}\left(\frac{d}{d\e}\omega_{V,\e}\right)\\
 & = & \Phi_{V,\e}^{*}\left(d\left(i_{X_{\e}^{V}}\omega_{V,\e}\right)\right)+\Phi_{V,\e}^{*}\left(d\left(\frac{d}{d\e}\theta_{V,\e}\right)\right)\\
 & = & \Phi_{V,\e}^{*}\left(d\left(i_{X_{\e}^{V}}\omega_{V,\e}+\frac{d}{d\e}\theta_{V,\e}\right)\right)\\
 & = & 0.
\end{eqnarray*}
The statement then follows from the initial condition $\omega_{V,\e=0}=\omega_{0}$.
\end{proof}
As a consequence, there exists a natural family of induced symplectic realizations
in Darboux form $q_{\e}=q\circ\Phi_{V,\e}:(W\subset T^{*}M,\omega_{0})\to(M,\e\pi)$
for $\e\in[0,1]$ associated to the spray $V$. In the particular
case $M=\R^{d}$ below, we shall see that there exists a simpler alternative construction leading
to a different Darboux realization which was introduced earlier by
Karasev (\cite{Karasev}).


\subsection{Symplectic realizations for $\R^{d}$\label{sec:The-flat-case}}

In this paragraph, we analyze further the case $M=\mathbb{R}^{n}$
together with a ``flat Poisson spray'' $V$ defined from the flat
connection $\nabla_{flat}$ on $T^{*}\R^{d}$ as in example \ref{ex: spray from nabla}.
We shall also consider its ``opposite'' $\overline{V}$ (which will
help us later on to take care of some sign issues), so that they are
given by 
\begin{equation}
V|_{\xi=(x,p)}=\pi^{ij}(x)p_{i}\partial_{x^{j}}\qquad\textrm{and}\qquad\overline{V}|_{\xi=(x,p)}=-\pi^{ij}(x)p_{i}\partial_{x^{j}}.\label{eq:flat spray V}
\end{equation}
In this case, the symplectic realization realization $q:(U(V)_{\e}\subset T^{*}\R^{n},\omega_{V,\e})\to(\R^{n},-\e\pi)$
obtained from the Poisson spray as outlined in Section \ref{sub:Symplectic-realizations-from-sprays}
coincides with the original construction by Weinstein in \cite{Weinstein83}.
We shall thus refer to it as the \emph{Weinstein realization}.
\begin{rem}
(Sign convention). Here, we will consider the ``opposite'' symplectic
realization $q:(U(\overline{V})_{\e}\subset T^{*}\R^{n},\omega_{\overline{V},\e})\to(\R^{n},\epsilon\pi)$
obtained from $\overline{V}$ in the same way. Observe that $\omega_{V,\e}=-\omega_{\overline{V},\e}$.
\end{rem}
In $\R^{d}$ there is a simple way of obtaining global Darboux frames
for $\omega_{\overline{V},\e}$ as follows. Since we are on flat space,
we can define an $\e$-family of maps 
\[
\psi_{\overline{V},\epsilon}:(x,p)\mapsto(\phi_{\epsilon}(x,p),p)
\]
from $U(V)_{\e}$ to $T^{*}\R^{d}$ by its action on coordinates 
\begin{eqnarray}
\psi_{\overline{V},\epsilon}^{*}x^{i} & = & \phi_{\e}^{i}(x,p):=\frac{1}{\e}\int_{0}^{\e}\left((\varphi_{s}^{\overline{V}})^{*}x^{i}\right)|_{(x,p)}ds\label{eq: def phi}\\
\psi_{\overline{V},\epsilon}^{*}p_{j} & = & p_{j}\nonumber 
\end{eqnarray}
It follows that 
\begin{equation}
\omega_{\overline{V},\epsilon}=\psi_{\overline{V},\epsilon}^{*}\omega_{0}=dp_{i}\wedge d\phi_{\epsilon}^{i}\label{eq: omega Rd}
\end{equation}
since the flat spray $\overline{V}$ only acts on the $x$ coordinates.

\begin{rem}
Notice that when $\pi$ is an analytic bivector on $\R^d$ then $\psi_{\overline{V},\e}$ is an analytic function of $\e$.
\end{rem}

On the other hand, since 
\begin{equation}
D_{x}\phi_{\e}(x,0)=\id\label{eq:Dphi}
\end{equation}
and $\psi_{\overline{V},\epsilon}(x,0)=(x,0)$, the inverse $\psi_{\overline{V},\epsilon}^{-1}$
exists in a neighborhood $W\subset T^{*}\R^{n}$ of the zero section.
It is then an immediate consequence of eq. \eqref{eq: omega Rd} that
$\psi_{\overline{V},\epsilon}^{-1}:W\to U(V)_{\e}$ is a global Darboux
frame for $q:(U(V)_{\e},\omega_{\overline{V},\epsilon})\to(\R^{d},\e\pi)$.
The induced Darboux-form realization is then given by the map 
\[
\alpha_{\overline{V},\epsilon}=q\circ\psi_{\overline{V},\epsilon}^{-1}:(W,\omega_{0})\rightarrow(\R^{d},\e\pi)
\]

It is not hard to see that the realization $\alpha_{\overline{V},\e}$
coincides with the one given by Karasev in \cite{Karasev} (after
rescaling the momenta using $m_\e$) and we shall refer to it as the \emph{Karasev
realization}. We have then shown:
\begin{prop}
\label{prop: We-Kar} For $M=\R^{d}$, the map $\psi_{\overline{V},\epsilon}^{-1}:W\to U(V)_{\e}$
defined above is a global Darboux frame for the Weinstein realization
defined by $\omega_{\overline{V},\e}$. Moreover, the induced Darboux-form
realization coincides with the Karasev realization $\alpha_{\overline{V},\e}$.
\end{prop}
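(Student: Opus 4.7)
The plan is to verify the proposition in three stages: first, that $\psi_{\overline{V},\epsilon}^{-1}$ exists as a diffeomorphism on a neighborhood of the zero section; second, that it is a Darboux frame for $\omega_{\overline{V},\e}$; third, that composing with $q$ reproduces the Karasev realization (up to the momentum rescaling $m_\e$). The first two are essentially packaged in the material immediately preceding the statement, so most of the work is bookkeeping; the real content is the comparison with Karasev's original construction.

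For invertibility, the crucial structural fact is that $\overline{V}=-\pi^{ij}(x)p_i\partial_{x^j}$ has no $\partial_{p}$-component, so the flow $\varphi_s^{\overline{V}}$ preserves the fibers of $q$: it fixes $p$ and moves only $x$. Thus $\psi_{\overline{V},\epsilon}(x,p)=(\phi_\e(x,p),p)$ is a fiber-preserving map, and $\psi_{\overline{V},\epsilon}(x,0)=(x,0)$. Combined with $D_x\phi_\e(x,0)=\id$ from eq.~\eqref{eq:Dphi}, the inverse function theorem applied fiberwise (together with compactness of $[0,1]$) yields an open neighborhood $W\subset T^*\R^d$ of the zero section on which $\psi_{\overline{V},\epsilon}^{-1}$ is well-defined and smooth.

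For the Darboux property, the key identity is eq.~\eqref{eq: omega Rd}. Because $\varphi_s^{\overline{V}}$ preserves $p$, one has $(\varphi_s^{\overline{V}})^*\theta_0=p_i\, d((\varphi_s^{\overline{V}})^*x^i)$; averaging over $s\in[0,\e]$ and commuting $d$ past the integral gives $\theta_{\overline{V},\e}=p_i\,d\phi_\e^i$, whence $\omega_{\overline{V},\e}=dp_i\wedge d\phi_\e^i=\psi_{\overline{V},\epsilon}^*\omega_0$. Pulling back by $\psi_{\overline{V},\epsilon}^{-1}$ shows $(\psi_{\overline{V},\epsilon}^{-1})^*\omega_{\overline{V},\e}=\omega_0$, i.e., it is a global Darboux frame in the sense of Definition~\ref{def: psi_V global darboux}.

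The main obstacle is the identification with Karasev's realization in step three. Here I would unfold the definitions: $\alpha_{\overline{V},\epsilon}(y,p)=q\circ\psi_{\overline{V},\epsilon}^{-1}(y,p)$ is by construction the unique $x$ such that $y^i=\frac{1}{\e}\int_0^\e((\varphi_s^{\overline{V}})^*x^i)|_{(x,p)}\,ds$. Karasev's realization $q_\e:(T^*\R^d,\omega_0)\to(\R^d,\e\pi)$ is characterized by an analogous averaging prescription along the Hamiltonian flow of the ``linear'' Hamiltonian $L_V=\pi^{ij}(x)p_ip_j$-type data, but written with momenta scaled by $\e$. The comparison therefore reduces to matching two averaging formulas: after applying the fiberwise dilation $m_\e$ (which converts $\omega_{V,1}$ at $\pi$ into $\omega_{V,\e}$ at $\e\pi$ and relates the respective flows by $\varphi_s^{V_\e}=m_\e^{-1}\circ\varphi_{s\e}^V\circ m_\e$), Karasev's defining equation for $q_\e$ becomes exactly the equation $y=\phi_\e(x,p)$ inverted above. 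I expect this matching to be a direct change of variables in the integral; no deeper structural input is needed, which is why the authors can assert that ``it is not hard to see'' the agreement.
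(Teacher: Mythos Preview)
Your proposal is correct and follows essentially the same route as the paper: the Darboux-frame claim is exactly the content of eq.~\eqref{eq: omega Rd} together with the inverse-function-theorem argument from eq.~\eqref{eq:Dphi}, and the identification with Karasev's construction is handled in the paper, as in your sketch, by the momentum rescaling $m_\e$ (the paper likewise asserts this without further detail). One slip to clean up: in your third paragraph you describe Karasev's flow as the Hamiltonian flow of ``$L_V=\pi^{ij}(x)p_ip_j$'', but that quantity vanishes by antisymmetry of $\pi$; the relevant object is the vector field $\pi^{ij}p_i\partial_{x^j}$ itself (the spray), not a Hamiltonian, and your rescaling argument goes through once you phrase it that way.
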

The Karasev realization $\alpha_{\overline{V},\e}\equiv \alpha^i_\e(x,p)$ can be thus obtained from the Weinstein realization by solving the following equation
\begin{equation}
\phi_{\e}(\alpha_{\e}(x,p),p)=x\ \forall p.\label{eq: H1}
\end{equation}

\begin{rem}
Notice that the Karasev realization $\alpha_{\overline{V},\e}$ does
not coincide with the realization $q_{\e}$ coming from the general
construction of Prop. \ref{cor: alpha for any M} ($\psi_{\overline{V},\e}$
is not a flow).
\end{rem}

\begin{example}\label{ex:lin1}
 Consider a linear Poisson structure $\pi^{ij}(x)=-c^{ij}_k x^k$ in $\R^d$. It is known that $(c^{ij}_k)$ are the structure constants of some Lie algebra $\g\simeq \R^d$ so that $x$ naturally belongs to $\g^*$ and $p$ to $\g$. In this case, one has $\varphi_s^{\overline{V}}(x,p) = exp(-s \ ad^*_p) x$, where $ad^*_p\in gl(\g^*)$ denotes the coadjoint representation and $exp$ is the exponential on $gl(\R^d)$. It follows from eq. \eqref{eq: def phi} that the transformation $\psi_{\overline{V},\e}$ can be given as an analytic function
 \[ \phi_\e(x,p) = \left[ \frac{e^z -1}{z} \right]_{z=-\e \ ad^*_p} (x),\]
 for $\e, x, p$ small enough. Because of this formula, the inverse of $\phi_\e$ with respect to composition, namely the realization $\alpha_{\overline{V},\e}$ in eq. \eqref{eq: H1}, can be given in terms of the multiplicative inverse of $(e^z - 1)/z$:
 \begin{equation}
  \alpha_{\overline{V},\e}(x,p) = \left[ \frac{z}{e^z -1} \right]_{z=\e \ ad_p} (x)= \sum_{n\geq 0} \frac{B_n}{n!} (-\e \ ad^*_p)^n (x), \label{eq:lin_case}
 \end{equation}
 where $B_n$ denote the Bernoulli numbers ($B_1=-\frac{1}{2})$. Notice that this expression is clearly related to the Baker-Campbell-Hausdorff formula for the Lie algebra $\g$ (see also \cite{FSG, Kathotia}).
\end{example}

\section{Explicit formulas for the formal realization in $\R^d$}\label{sec:exp_form}

A \emph{formal symplectic realization} $s_\e:(\Ts\R^{n},\omega_{0}) \to (\R^{n},\epsilon\pi)$ is a formal power series of the form 
\[
s_{\epsilon}(p,x)=x+\epsilon s_{(1)}(p,x)+\epsilon^{2}s_{(2)}(p,x)+\cdots,
\]
where the $s_{(i)}$'s are smooth functions on $\Ts\R^{n}$, formally
satisfying (i.e. at each order in the formal parameter $\e$) the partial differential equation
\begin{equation}
\{s_{\e}^{i},s_{\e}^{j}\}_{\omega_{0}}(x,p)=\e\pi^{ij}(s_{\e}(x,p)).\label{eq: Lie system}
\end{equation}

In this section, we give an explicit formula for the formal version of the Karasev realization $\alpha_\e$, generalizing formula \eqref{eq:lin_case}  to the non-linear case. We shall express this formula in terms of rooted trees and elementary differentials, based on the foundational work of Butcher \cite{Bu}.

\subsection{Formal expansions}

We now turn to the formal setup by expanding $\omega_{\overline{V},\epsilon}$
in formal power series in $\epsilon$. The general discussion about uniqueness of realizations extends to the formal setup and we shall focus below on a particular isomorphism between formal Weinstein and Karasev realizations. Later on, in Section \textsection{\ref{sec:Comparison}}, we shall show that the formal Karasev realization coincides directly with one coming from Kontsevich's star product. 
\begin{rem}
(Conventions). We shall adopt the following conventions when working
with formal power series in $C^{\infty}(T^{*}\R^n)[[e]]$. Let $w_{\e}^{i}(x,p)=w_{0}^{i}(x,p)+\e\tilde{w}_{\e}^{i}(x,p)$
with $w_{0}^{i}(x,p)\in C^{\infty}(T^{*}\R^{n})$ and $\tilde{w}_{\e}^{i}(x,p)\in C^{\infty}(T^{*}\R^{n})[[\epsilon]]$.
For $f(x,p)\in C^{\infty}(T^{*}\R^{n})$ we shall denote (multivariable
Taylor expansion) 
\begin{equation}
f(w_{\e}(x,p),p)=\sum_{n\geq0}\frac{\e^{n}}{n!}\tilde{w}_{\e}^{i_{1}}(x,p)...\tilde{w}_{\e}^{i_{n}}(x,p)\left[\frac{\partial^{(n)}}{\partial x^{i_{1}}...\partial x^{i_{n}}}f\right](w_{0}(x,p),p).\label{eq: formal composition}
\end{equation}
 
\end{rem}
First of all notice that the formal flow of $\overline{V}$ can be
written as 
\[
\phi_{s}^{\overline{V}}(x,p)=(\exp(s\overline{V})(x),p),
\]
where $\bar{x}=\exp(s\overline{V})(x)$ is the formal flow of $\overline{V}$
(seen as a $p$-dependent vector field on $\R^{d}$) defined in components
by the following formal power series 
\[
\bar{x}^{i}=\sum_{n\geq0}\frac{s^{n}}{n!}L_{\overline{V}}^{n}(x^{i}).
\]

\begin{lem}
The map $\phi_{\e}$ defined in eq. \eqref{eq: def phi} is given
by the formal expansion 
\begin{equation}
\phi_{\e}^{i}(x,p)=\sum_{n\geq0}\frac{\e^{n}}{(n+1)!}(L_{\overline{V}})^{n}(x^{i}).\label{eq: phi}
\end{equation}
\end{lem}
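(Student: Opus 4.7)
The plan is a direct computation: substitute the formal Taylor expansion of the time-$s$ flow pullback into the defining integral for $\phi_\e^i$, swap sum and integral (which is legal because we work formally in $\e$, so only finitely many terms contribute to each order), and perform the elementary $\int_0^\e s^n\,ds = \e^{n+1}/(n+1)$.

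First I would recall the formal-flow identity noted just above the lemma: since $\overline{V}|_{(x,p)} = -\pi^{ij}(x)p_i \partial_{x^j}$ has no $\partial_{p_i}$-component, $\varphi_s^{\overline{V}}$ preserves $p$, and its action on the base coordinate $x^i$ is given by the formal series
\[
(\varphi_s^{\overline{V}})^* x^i \;=\; \sum_{n\geq 0} \frac{s^n}{n!}\, L_{\overline{V}}^{\,n}(x^i),
\]
understood as an element of $C^\infty(T^*\R^d)[[s]]$ (with $p$ entering as a parameter inside $L_{\overline{V}}$).

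Next I would plug this expansion into \eqref{eq: def phi}, interchange the summation with the integration in $s$ over $[0,\e]$ (justified either by viewing the series as a formal power series in $s$ and $\e$, or, in the analytic case, by the remark following \eqref{eq: omega Rd}), and integrate term-by-term:
\[
\phi_\e^i(x,p) \;=\; \frac{1}{\e}\int_0^\e \sum_{n\geq 0} \frac{s^n}{n!}\, L_{\overline{V}}^{\,n}(x^i)\, ds \;=\; \frac{1}{\e}\sum_{n\geq 0} \frac{1}{n!}\, L_{\overline{V}}^{\,n}(x^i)\cdot \frac{\e^{n+1}}{n+1} \;=\; \sum_{n\geq 0} \frac{\e^n}{(n+1)!}\, L_{\overline{V}}^{\,n}(x^i),
\]
which is the claimed formula.

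There is essentially no hard step: the only thing to be careful about is the bookkeeping that $L_{\overline{V}}^n$ acts with $p_i$ treated as a parameter, so that the expression $L_{\overline{V}}^n(x^i)$ is a well-defined polynomial in the $p_i$ whose coefficients are smooth functions of $x$, and therefore each order in $\e$ on the right-hand side is a finite, well-defined expression, consistent with the conventions spelled out in \eqref{eq: formal composition}.
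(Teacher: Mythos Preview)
Your proof is correct and is exactly the intended argument. In fact the paper states this lemma without proof, having just recorded the formal flow expansion $(\varphi_s^{\overline{V}})^*x^i=\sum_{n\geq 0}\frac{s^n}{n!}L_{\overline{V}}^n(x^i)$; your substitution of this series into the defining integral \eqref{eq: def phi} followed by term-by-term integration $\frac{1}{\e}\int_0^\e s^n\,ds=\frac{\e^n}{n+1}$ is precisely the computation the authors leave implicit.
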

This means that the formal version of the Karasev realization $\alpha_{\overline{V},\e}$
is defined by the (unique) functions $\alpha_{\e}^{i}(x,p)\in C^{\infty}(T^{*}\R^{n})[[\e]]$
solving eq. \eqref{eq: H1} with $\phi_{\e}$ given by the formal
expansion in eq. \eqref{eq: phi}.

On can actually solve for $\alpha_\e$ recursively as follows. Consider the formal expansion
\begin{equation}
\alpha_{\e}^{i}(x,p)=x^{i}+\e\sum_{r\geq1}\e^{r-1}\alpha_{(r)}^{i}(x,p).\label{eq:alpha}
\end{equation}
Using definition Equation \eqref{eq: formal composition} and Formula
\eqref{eq: phi} for $\phi_{\e}$, one gets that Equation \eqref{eq: H1} for $\alpha_\e$
is equivalent to: 
\[
x^{i}=\sum_{n,m\geq0}\sum_{r_{1},..,r_{m}\geq1}\frac{\e^{(n+r_{1}+...+r_{m})}}{(n+1)!m!}\alpha_{(r_{1})}^{i_{1}}(x,p)\dots\alpha_{(r_{m})}^{i_{m}}(x,p)\left[\frac{\partial^{(m)}(L_{\overline{V}}^{n}x^{i})}{\partial x^{i_{1}}\dots\partial x^{i_{m}}}\right](x,p).
\]
Reading this equation at order $N$ in $\e$, one gets

\begin{gather*}
\alpha_{(N)}^{i}(x,p)+\\
+\sum_{n=1}^{N-1}\sum_{m=1}^{N-n}\sum_{\underset{r+\dots+r_{m}=N-n}{r_{l}\geq1}}\frac{1}{(n+1)!m!}\alpha_{(r_{1})}^{i_{1}}(x,p)\dots\alpha_{(r_{m})}^{i_{m}}(x,p)\left[\frac{\partial^{(m)}(L_{\overline{V}}^{n}x^{i})}{\partial x^{i_{1}}\dots\partial x^{i_{m}}}\right](x,p)+
\end{gather*}
\begin{equation}
+\frac{1}{(N+1)!}(L_{\overline{V}}^{N}x^{i})(x,p)=0\label{eq: recur}
\end{equation}
This is a recursive formula for the terms $\alpha_{(N)}$
in $\alpha_{\e}$.
We summarize these results in the following 
\begin{thm}
\label{thm: formal canonical flat realization} The formal
solution $\alpha_{\e}(x,p)$ of Equation \eqref{eq: H1} is a formal
symplectic realization \textup{$\alpha_{\e}:(T^{*}\mathbb{R}^{n},\omega_{0})\to(\R^{n},\e\pi)$.}  Moreover, the realization map $\alpha_\e$ is given by the formal expansion \eqref{eq:alpha} with coefficients recursively defined by \eqref{eq: recur} and the formal Darboux
frame $\psi_{\overline{V},\e}^{-1}(x,p)=(\alpha_{\e}(x,p),p)$ takes
the formal Weinstein realization $q:(T^{*}\R^{d},\omega_{\overline{V},\epsilon})\rightarrow(\R^{d},\e\pi)$
to the formal Karasev realization given by $\alpha_{\e}$.
\end{thm}

It is easy to compute the first terms of the realization using the recursion \eqref{eq: recur}:
\begin{equation}\label{eq:firstterms}
 \alpha_\e^{i}(x,p)=x^{i} + \frac{\epsilon}{2}\pi^{vi}p_{v}+\frac{\epsilon^{2}}{12}\partial_{u}\pi^{vi}\pi^{wu}p_{v}p_{w} + \frac{\epsilon^{3}}{48}\partial_{u}\partial_{w}\pi^{vi}\pi^{ku}\pi^{lw}p_{v}p_{k}p_{w}+\mathcal{O}(\epsilon^{4}).
\end{equation}

\begin{example}
 Let us go back to the linear Poisson structure of Example \ref{ex:lin1}. In this case, $L^n_{\overline{V}}(x) = (-ad^*_p)^n(x)$, which is linear in $x$ for all $n\geq 0$. One can directly conclude that 
 \[ \alpha_{(n)}(x,p) = \frac{(-1)^n B_n}{n!} ad_p^n(x),\]
 with $B_n$ the Bernoulli numbers, solves the recursion \eqref{eq: recur}. Indeed, plugging this equation into \eqref{eq: recur}, derivatives of order $\geq 2$ vanish and we get
 \[ \left( \frac{B_N}{N!} + \sum_{k=0}^{N-1} \frac{B_k}{k! (N-k+1)!} \right) (-ad_p^*)(x) = 0,\]
 for all $N\geq 1$. The l.h.s. is known to be zero for Bernoulli numbers ($B_0 = 1$) and we thus recover \eqref{eq:lin_case} as a particular case of the general recursion \eqref{eq: recur}.
\end{example}

In the non-linear case, the underlying combinatorics needed to solve recursion \eqref{eq: recur} is more involved and we need to resort to rooted trees and elementary differentials, which we proceed to define.

\subsection{Rooted trees and elementary differentials}

A \textbf{graph} is the data $(V,E)$ of a finite set of vertices
$V=\{v_{1},\dots,v_{n}\}$ together with a set of edges $E$, which
is a subset of $V\times V$. The number of vertices is called the
\textbf{degree} of the graph and is denoted by $|\Gamma|$. We think
of $(v_{1},v_{2})\in E$ as an arrow that starts at the vertex $v_{1}$
and ends at $v_{2}$.

Two graphs are \textbf{isomorphic} if there is a bijection between
their vertices that respects theirs edges. The set $\bar{\Gamma}$
of all isomorphic graphs $ $to a given graph $\Gamma$ is called
a \textbf{topological graph}.

A \textbf{symmetry} of a graph is an automorphism of the graph (i.e.
a relabeling of its vertices that leaves the graph unchanged). The
group of symmetries of a given graph $\Gamma$ will be denoted by
$\sym(\Gamma)$. Note that the number of symmetries of all graphs
sharing the same underlying topological graph is equal; we define
the \textbf{symmetry coefficient} $\sigma(\overline{\Gamma})$ of
a topological graph $\overline{\Gamma}$ to be the number of elements
in $\sym(\Gamma)$, where $\Gamma\in\overline{\Gamma}$.

A \textbf{rooted tree} is a graph that (1) contains no cycle, (2)
has a distinguished vertex called the \textbf{root}, (3) whose set
of edges is oriented toward the root. We will denote the set of rooted
trees by $RT$ and the set of topological rooted trees by $[RT]$.

We represent graphically topological rooted trees as depicted in the
following figure:

\begin{figure}[H]
\begin{centering}
\includegraphics[scale=0.5]{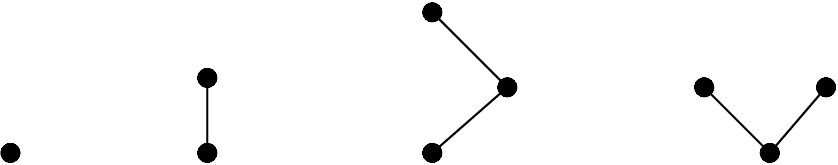} 
\par\end{centering}

\caption{\label{fig:The-rooted-tree-1}}
\end{figure}

The set of topological rooted trees can be described recursively as
follows: The single vertex graph $\bullet$ is in $[RT]$ and if $t_{1},\dots,t_{n}\in[RT]$
then so is 
\[
t=[t_{1},\dots,t_{n}]_{\bullet},
\]
where the bracket is to be thought as grafting the roots of $t_{1},\dots,t_{n}$
to a new root, which is symbolized by the subscript $\bullet$ in
the bracket $[\,,\dots,\,]$. 
\begin{example}
For instance, the formal expressions 
\[
\bullet,\qquad[\bullet]_{\bullet},\qquad[[\bullet]_{\bullet}]_{\bullet},\qquad[\bullet,\bullet]_{\bullet},
\]
correspond, from left to right, to the topological trees depicted
in Figure \ref{fig:The-rooted-tree-1}. Also observe that the total
number of ``$\bullet$'' in these formal expressions corresponds
to the degree of the rooted tree (i.e. the total number of vertices). \end{example}
\begin{rem}
Since we are dealing with topological rooted trees, the ordering in
$[t_{1},\dots,t_{m}]_{\bullet}$ is not important (for instance, we
do not distinguish between $[\bullet,[\bullet]_{\bullet}]_{\bullet}$
and $[[\bullet]_{\bullet},\bullet]_{\bullet}$).
\end{rem}

\begin{defn}\label{def:elemdiff}
Let $X=X^{i}\partial_{i}$ be a vector field on $\R^{d}$. We define
the \textbf{elementary differential} of $X$ recursively as follows:
For the single vertex tree, we define $D_{\bullet}^{u}X=X^{u}(x)$,
and for $t=[t_{1},\dots,t_{m}]$ in $[RT]$, we define 
\begin{equation}
D_{t}^{u}X=\partial_{i_{1}}\dots\partial_{i_{m}}X^{u}(x)D_{t_{1}}^{i_{1}}X\cdots D_{t_{m}}^{i_{m}}X,\label{eq:rec. form.}
\end{equation}
where we used the Einstein summation convention. For a (non-topological)
rooted tree $t\in RT$, we define $D_{t}^{u}X:=D_{\bar{t}}^{u}X$,
where $\bar{t}$ is the topological tree underlying $t$.
\end{defn}

Following \cite{Bu,HW}, one has 
\begin{equation}\label{eq:Xn}
L_X^n x^i = \sum_{t \in[ RT], \ |t|=n} \frac{|t|!}{t! \sigma(t)} D^i_t X,
\end{equation}
where the \emph{tree factorial} is recursively defined as $t!= |t| t_1! \cdots t_k!$ for $t=[t_1,..,t_k]$, $\bullet ! =1$. The symmetry factor $\sigma(t)$ also admits a recursive formula, namely, $\sigma([t_1^{n_1},..,t_1^{n_k}]) = n_1!\sigma(t_1)^{n_1} ...  n_k!\sigma(t_k)^{n_k}$ where the trees $t_1,..,t_k$ are assumed different and the exponent $n_i$ denotes it is repeated $n_i$-times.

The reader is referred to the foundational work of Butcher \cite{Bu} on elementary differentials and the use of trees in ordinary differential equations for more information.

\subsection{The Butcher group}
In a series of works culminating in \cite{Bu}, and inspired by the Runge-Kutta method for ODEs, Butcher discovered the group $G_B$ to be defined below. It tuned out to be connected to other interesting problems (see \cite{CHV} for a summary of related topics) and its appearence here is due to the fact that it encodes the combinatorics behind the construction of the Karasev realization out of the Weinstein realization via eq. \eqref{eq: H1}. Let
\[G_B = \{ a:[RT]_0 \to \R : a_\emptyset = 1 \}\]
be the set of real valued functions on rooted trees. ($[RT]_0$ is the set of topological  trees including the \emph{empty tree} $\emptyset$.) Given a vector field $X\equiv X^i(x)\partial_{x^i}$ on $\R^n$, one can associate to each such function a formal series (or $B$-series):
\[ a \mapsto B(a, \e X, x) = x + \sum_{t \in [RT]} \frac{\e^{|t|}}{\sigma(t)} \ a_t \ D_t X |_x.\]
Here $\sigma(t)$ denotes the symmetry coefficient of $t$. One can extend this assignment to $B(a, \e X, f)$ for any function $f\equiv f(x)$ by expanding formally around $\e=0$ (c.f. eq. \eqref{eq: formal composition}). 

\begin{example}
 Let $\overline{V}_p\equiv \overline{V}^i(x,p) \partial_{x^i}$ be the vector field on $\R^d$ defined by $\overline{V}$ with $p$ seen as a fixed external parameter. Then, the formal expansion eq. \eqref{eq: phi} for $\phi_\e(x,p)$ can be expressed as
 \begin{equation}\label{eq: phiB}
  \phi_\e(x,p) = B(a, \e \overline{V}_p, x), \ where \ \ a_t = \frac{1}{t!(|t|+1)}.
 \end{equation}
Above we have used formula \eqref{eq:Xn} to evaluate $L^n_{\overline{V}_p}x^i$ in terms of rooted trees. 
\end{example}

It can then be shown (\cite{Bu, HW}) that $G_B$ admits a group structure (the \emph{Butcher group} structure) $(a, b) \mapsto a\cdot b$ such that  
\[ B(b, \e X, B(a,\e X, x)) = B(a\cdot b, \e X, x).\]
The unit corresponds to $a_t = 0, \forall t \in [RT]$.

An explicit recursive formula for the product is 
\[ (a\cdot b)_t = \sum_{s\in S(t)} a(t\backslash s) b(s_t).\]
In this formula, $S(t)$ denotes the set of \emph{ordered subtrees} of $t$. These are given by subsets $s$ of vertices of $t$ (including the empty set) such that $s$ is connected with respect to the edges of $t$ and that $s$ contains the root of $t$ (when non-empty). Also above, $t\backslash s$ denotes the \emph{forest} (i.e. the disjoint union of trees) obtained from $t$ by removing the vertices of $s$ as well as their adjacent edges and the map $t \mapsto a_t$ is extended to forests as sending disjoint unions to products. Finally, $s_t$ denotes the tree with edges and root induced from those of $t$.

In what follows, we shall be mainly concerned with inversion in the Butcher group. First, notice that the above formula for the Butcher group multiplication induces a recursive formula for the inverse $a \mapsto a^{-1} \in G_B$:
\begin{equation}\label{eq:recinv}
 a^{-1}_t = - \sum_{s\in S(t), \ s\neq t} a_{t\backslash s} \ a^{-1}_{s_t}.
\end{equation}

Alternatively, there is the following explicit formula for $a \mapsto a^{-1} \in G_B$ (see \cite{CHV}):
\begin{equation}\label{eq: invB}
 a^{-1}_t = \sum_{p\in \mathcal{P}(t)} (-1)^{|p_t|} \ a_{t \backslash p}.
\end{equation}
In this formula, for each tree $t$, $\mathcal{P}(t)$ denotes the set of \emph{partitions of $t$}, namely, all subsets of the set of edges of $t$ (including the empty set). Here, $t \backslash p$ is used for the  forest obtained from $t$ after the edges in $p$ are erased while $p_t$ is the tree obtained by contracting each tree in the forest $t\backslash p$ to a single vertex and re-establishing the edges of $p$.

\subsection{An explicit formula in terms of elementary differentials}

Let $c: [RT] \to \R$ be the function defined by 
\begin{equation}\label{eq:ct}
 c_t = \sum_{p\in \mathcal{P}(t)} (-1)^{|p_t|} \ \prod_{\t \in t\backslash p} \frac{1}{\t!(|\t|+1)},
\end{equation}
where the rooted trees $\t$ are taken from the forest $t\backslash p$.   

We have the following expression for the formal Karasev realization.

\begin{thm}\label{thm:alpha_c_form}
Let $\pi$ be a Poisson structure on $\R^d$. The functions 
\begin{equation}\label{eq:alpha_el_diff}
\alpha^i_\e(x,p) = x^i + \sum_{t \in [RT]} \frac{\e^{|t|}}{\sigma(t)} \ c_t \ D^i_t\overline{V}  
\end{equation}
with coefficients $t \mapsto c_t$ given by eq. \eqref{eq:ct} define a formal symplectic realization  $\alpha_\e: (T^*R^d,\omega_0) \to (\R^d, \e\pi)$. (And this coincides with the formal Karasev realization defined by eq. \eqref{eq: H1}.)
\end{thm}

\begin{proof} 
Following eq. \eqref{eq: phiB}, we can write $\phi_\e(x,p)=B(a, \e \overline{V}_p, x)$. Let us assume that there is another element $c \in G_B$ such that $\alpha_\e(x,p) = B(c, \e \overline{V}_p, x)$. Then, the defining eq. \eqref{eq: H1} for the Karasev realization $\alpha_\e$ is equivalent to
\[x = B(c, \e \overline{V}_p, B(a, \e \overline{V}_p, x)) = B(a\cdot c,\e \overline{V}_p, x)   \ \forall x, p,\]
where $a \cdot c$ denotes multiplication in the Butcher group. Hence, $c = a^{-1}$ provides a solution to this equation. The Theorem then follows from the general formula for  the inverse in the Butcher group.
\end{proof}

One can easily compute the first coefficients:
\[
c_{\bullet}=-\frac{1}{2},\qquad c_{[\bullet]_{\bullet}}=\frac{1}{12},\qquad c_{[\bullet,\bullet]_{\bullet}}=-\frac{1}{24},\qquad c_{[[\bullet]_{\bullet}]_{\bullet}}=0,
\]
accordingly leading to the expression \eqref{eq:firstterms} for the first terms in the Karasev realization.

\begin{rem} 
One can also give a recursive formula for the coefficients $c_t$ using eq. \eqref{eq:recinv} for the inverse $a^{-1}$, namely, 
  \begin{equation}\label{eq:ctrec}
   c_t = - \sum_{s\in S(t), s\neq t} c_{s_t} \prod_{\t \in t\backslash s} \frac{1}{\t!(|\t|+1)}.
  \end{equation}
\end{rem}

\begin{example}
  Let us denote $t_N$ the \emph{tall tree} with $N$ vertices. The recursive formula \eqref{eq:ctrec} reduces to
 \[ c_{t_N} = - \sum_{k=0}^{N-1} \frac{c_{t_k}}{(N-k+1)!}.\]
 It thus follows that $c_{t_N} = \frac{B_N}{N!}, \forall N\geq 1$.
For a linear bivector $\pi$, tall trees are the only trees contributing in \eqref{eq:alpha_el_diff}, since higher derivatives of $\pi$ vanish. In this way, we recover eq. \eqref{eq:lin_case} again, now from our general formula in terms of elementary differentials.
 \end{example}

\subsection{A Formula in terms of iterated integrals} \label{subsec:iter}

In this Subsection, we show that the coefficients in the Karasev realization eq. \eqref{eq:alpha_el_diff} admit an iterated integral definition, as follows.

Let $t=[t_{1},\dots,t_{m}]$ be a topological rooted tree. We define
recursively functions of $\theta\in[0,1)$ by
\begin{equation}\label{eq:Irec}
 I_{t}(\theta)=\int_{0}^{1}d\bar{\lambda}\int_{\theta}^{\bar{\lambda}}d\bar{\theta}\; I_{t_{1}}(\bar{\theta})\cdots I_{t_{m}}(\bar{\theta}),
\end{equation}
with $I_{\bullet}(\theta)=\frac{1}{2}-\theta.$

\begin{thm}
\label{thm:iterformula}Let $\alpha_{\overline{V},\e}:(T^*\R^d, \omega_0) \to (\R^d, \e\pi)$ be the formal Karasev symplectic realization defined by the Poisson spray $\overline{V}$. Then, 
\begin{equation}
\alpha_{\overline{V},\e}^{i}(p,x)=x^i+\sum_{t\in[RT]}\frac{\e^{|t|}}{\sigma(t)}\ (-1)^{|t|} I_t(0) \ D_{t}^{i}\overline{V} .
\end{equation}
 \end{thm}

 Our proof is based on the following observation (see \cite[Section 3.4]{But10}). 
 Let $\eta$ be an $\R^n$-valued function on $[0,1]$, $f$ a vector field on $\R^n$ and $L$ a linear operator acting on scalar functions of $1$-variable. We consider the following equation
 \begin{equation}\label{eq:int_eq_eta}
 \eta^i(\xi) = y^i_0 + \e L(f^i\circ \eta)(\xi),
 \end{equation}
and express the (formal) solution in terms of elementary differentials and rooted trees. Define recursively scalar functions $\Lambda_t\equiv \Lambda_t(\xi)$
by \begin{equation}\label{eq:Lrec} \Lambda_t = L( \Lambda_{t_1}  ...  \Lambda_{t_k}) , \ \Lambda_\bullet = L( {\bf 1} ), \ \ for \ t=[t_1\dots t_k].\end{equation} 
We define successive approximations $\eta_{(k)}$ by Taylor expanding $f$ around $y_0$ up to order $k$ and using the previous approximations on the r.h.s. of the equation, namely,
\[ \eta_{(k)}^i = y_0 + \e L\left( \sum_{0\leq l \leq k} \frac{1}{l!} \eta_{(k-1)}^{i_1}\dots \eta_{(k-1)}^{i_l}
 \partial_{i_1\dots i_l}f^i (y_0)\right).
\]
By induction, one thus obtains for $k\to \infty$,
\begin{equation}\label{eq:sol_eta}
 \eta^i(\xi) = y^i_0 + \sum_{t\in [RT]} \frac{\e^{|t|}}{\sigma(t)}\ \Lambda_t(\xi) \ D^i_tf(y_0).
\end{equation}

\begin{proof}(of Thm. \ref{thm:iterformula}) 
Let $f$ be an arbitrary vector field on $\R^n$, with (formal) flow $\varphi_s=(\varphi_s^1,\dots, \varphi_s^n)$. Consider the associated function \[\phi_\e(y) = \frac{1}{\e} \int_0^\e \varphi_s(y) \ ds, \ y\in \R^n,\] and its inverse $\alpha_\e \equiv \alpha_\e (y)$ defined by \[ \phi_\e (\alpha_\e(y))= y \ \forall y.\]
We know that $\phi_\e(y) = B(a, \e f, y)$ is given by a B-series with $a\in G_B$ defined in eq. \eqref{eq: phiB} and we look for $c\in G_B$ such that $\alpha_\e(y)=B(c,\e f, y)$.

It follows from the definition of $\phi_\e$ that
\[ \phi_\e( \varphi_{\e \xi}(y)) = \phi_\e(y) + \e \int_0^\xi du \int_u^{u+1}  ds \ f(\varphi_{\e s}(y)).\]
 On the other hand, $\phi_\e(y) = y + \e \int_0^1du \int_0^u ds \ f(\varphi_{\e s}(y))$. It thus follows that,
 \begin{eqnarray*} 
\phi_\e(\varphi_{\e\xi}(\alpha_\e(y)))&=& \phi_\e(\alpha_\e(y)) +  \e \int_0^\xi du \int_u^{u+1}  ds \ f(\varphi_{\e s}(\alpha_\e(y))), \\
\varphi_{\e\xi}(\alpha_\e(y)) + \e \int_0^1du \int_0^u ds f(\varphi_{\e s + \e \xi}(\alpha_\e(y))) &=&
y +  \e \int_0^\xi du \int_u^{u+1}  ds \ f(\varphi_{\e s}(\alpha_\e(y))).
\end{eqnarray*}
 where, on the r.h.s. of the second step we used that $\alpha_\e$ is the inverse of $\phi_\e$.
 
 Denoting $\eta^i(\xi) = \varphi^i_{\e \xi}(\alpha_\e(y))$ and defining the operator 
 \[ g(\xi) \mapsto L(g)(\xi) = \int_0^\xi du  \int_u^{u+1}ds \ g(s)-\int_0^1 du \int_\xi^{u+\xi}ds \ g(s),\]
 it follows that $\eta$ must be a solution of eq. \eqref{eq:int_eq_eta}. From the expression \eqref{eq:sol_eta} for the solution we get
 \[ \alpha^i_\e(y) = \eta^i(0) = \sum_{t\in RT}  \frac{\e^{|t|}}{\sigma(t)}\ \Lambda_t(0) \ D^i_tf(y_0),\]
 where the iterated integrals $\Lambda_t(\xi)$ are defined by the recursion \eqref{eq:Lrec}. We have thus shown that $c = \Lambda_t(0)$ yields the B-series for $\alpha_\e$ for any $f$. Finally, one can verify that
 \[ L(g)(\xi) = -\int_0^1 du \int_\xi^u ds \ g(s) ,\]
 by examining the underlying $2d$ (polygonal) region of integration in the first definition of $L$, noting that the integrand only depends on $s$ (not on $u$) and thus deforming this region along the $u$-axis without changing the value of the integral. Recalling the definition \eqref{eq:Irec} of the iterated integrals $I_t$, we get $I_t = (-1)^{|t|} \Lambda_t$ and the result thus follows.
\end{proof}

The iterated integrals \eqref{eq:Irec} are very easy to compute; for
the first rooted trees, we have 
\begin{eqnarray*}
I_{\bullet}(\theta) & = & -\theta+\frac{1}{2},\\
I_{[\bullet]_{\bullet}}(\theta) & = & \frac{\theta^{2}}{2}-\frac{\theta}{2}+\frac{1}{12},\\
I_{[\bullet,\bullet]_{\bullet}}(\theta) & = & -\frac{\theta^{3}}{3}+\frac{\theta^{2}}{2}-\frac{\theta}{4}+\frac{1}{24},\\
I_{[[\bullet]_{\bullet}]_{\bullet}}(\theta) & = & -\frac{\theta^{3}}{6}+\frac{\theta^{2}}{4}-\frac{\theta}{12}.
\end{eqnarray*}
With these, we recover once more formula \eqref{eq:firstterms} for the first terms of the formal Karasev realization.

\section{Comparison to the Kontsevich realization\label{sec:Comparison}}

In this section, we recall the formal \emph{Kontsevich realization} $s_K:(T^*\R^d, \omega_0) \to (\R^d, \e\pi)$ introduced in \cite{FSG}
from the (tree-level part of) Kontsevich's quantization formula. We show that it coincides with the formal Karasev realization and, moreover, that the only contributing Kontsevich weights are exactly  those defined by the recursion \eqref{eq:Irec}.
%
%
%
%
%
%
%

\subsection{The generating function and the Kontsevich realization}

In \cite{FSG}, it was shown that the formal power series 
\begin{equation}
S_{\frac{\pi}{2}}(p_{1},p_{2},x)=(p_{1}+p_{2})x+\sum_{n=1}^{\infty}\frac{\epsilon^{n}}{n!}\sum_{\Gamma\in T_{n,2}}W_{\Gamma}\hat{B}_{\Gamma}\left(\frac{\pi}{2}\right)(p_{1},p_{2},x),\label{eq:gen. fct.}
\end{equation}
with $x\in\R^{d}$ and $p_{1},p_{2}\in(\R^{d})^{*}$ is a formal generating
function for the formal symplectic groupoid $\Ts\R^{n}\rightrightarrows\R^{d}$
(where the cotangent bundle is endowed with its canonical symplectic
structure $\omega_{0}=\sum_{i}dp^{i}\wedge dx^{i}$) integrating $(\R^{n},\epsilon\pi)$.
In the above formula, 
\begin{itemize}
\item $T_{n,2}$ is the set of \textbf{Kontsevich trees} of type $(n,2)$; 
\item $W_{\Gamma}$ is a real number called the \textbf{Kontsevich weight}
of $\Gamma$; 
\item $\hat{B}_{\Gamma}(\pi)$ is the symbol of the \textbf{Kontsevich operator}
$B_{\Gamma}(\pi)$. 
\end{itemize}
We refer the reader to Appendix \ref{app:Kgraphs} 
for the definitions of these objects and to \cite{Kontsevich,UnivGen} for more detail. 

As shown in \cite{FSG}, the formal source map can be
extracted from the formal generating function as follows: 
\begin{eqnarray}
s_{K}(p,x) & = & \frac{\partial S_{\frac{\pi}{2}}}{\partial p_{2}}(p,0,x).\label{eq:source}
\end{eqnarray}
Explicitly, the formal realization $s_K:(\Ts\R^{n},\omega_{0})\to (\R^{n},\epsilon\pi)$ is given by 
\begin{equation}
s_{K}(p,x)=x+\sum_{n=1}^{\infty}\frac{\epsilon^{n}}{n!}\sum_{\Gamma\in T_{n,2}}\frac{W_{\Gamma}}{2^{n}}\frac{\partial\hat{B}_{\Gamma}(\pi)}{\partial p_{2}}(p,0,x).\label{eq: K-source}
\end{equation}
and we will refer to it as the \emph{formal Kontsevich realization}.

\begin{rem} (Conventions)
The formal generating function above depends on the Poisson structure
through the symbols of the Kontsevich operators. In \cite{FSG} and
\cite{UnivGen}, the scaling of the Poisson structure is different:
$S_{\pi}$ is used instead of $S_{\frac{\pi}{2}}$, and thus the corresponding
formal symplectic groupoid integrates the Poisson structure $2\epsilon\pi$
instead of $\epsilon\pi$. We also use Kathotia's convention (\cite{Kathotia})
for the Kontsevich weights; namely, 
\[
W_{\Gamma}=n!W_{\Gamma}^{K},
\]
where $W_{\Gamma}^{K}$ is the weight actually defined by Kontsevich
in \cite{Kontsevich}.
Also, 
the factor $\frac{1}{2^{n}}$ in eq. \eqref{eq: K-source}
comes from the recalling of the Poisson structure: namely, 
\[
B_{\Gamma}(\frac{1}{2}\pi)=\frac{1}{2^{n}}B_{\Gamma}(\pi),\quad\textrm{for }\Gamma\in T_{n,2}.
\]
\end{rem}

\subsection{Comparison between Karasev and Kontsevich realizations}

As mentioned in the Introduction, the realization $\alpha_{\overline{V},\epsilon}$ is a formal version
of the one introduced by Karasev in \cite{Karasev}. This, in turn,
was shown in \cite{UnivGen} to coincide with the realization $s_{K}$
coming from Kontsevich trees. Here we explain the argument for completeness (c.f. Theorem \ref{thm:realizations}).

The key tool is the following Lemma:
\begin{lem}(\cite{UnivGen})
\label{lem:uniqueness flat realizations} All formal symplectic realizations
from $(\Ts\R^{n},\omega_{0})$ to $(\R^{n},\e\pi)$ starting with
the same first order term $s_{(1)}$ and such that, for $i\geq1$,
we have 
\begin{eqnarray}
\langle s_{(i)}(p,x),p\rangle & = & 0,\quad i\geq1,\label{eq:vanish}\\
s_{(i)}(\lambda p,x) & = & \lambda^{i}s_{(i)}(p,x),\label{eq:homog}
\end{eqnarray}
 coincide.\end{lem}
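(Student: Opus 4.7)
The plan is to argue by induction on $n$: assuming $s_{(i)} = s'_{(i)}$ for all $i \leq n-1$ (the base case $n=1$ being the hypothesis of the lemma), set $\Delta_n := s_{(n)} - s'_{(n)}$ and aim to show $\Delta_n = 0$.

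First I would expand both sides of \eqref{eq: Lie system} in powers of $\e$, writing $s_{(0)}^i := x^i$. The left-hand side becomes
\[
\{s_\e^i, s_\e^j\}_{\omega_0} = \sum_{k,l \geq 0} \e^{k+l}\, \{s_{(k)}^i, s_{(l)}^j\}_{\omega_0},
\]
while the right-hand side, obtained by Taylor-expanding $\pi^{ij}(s_\e)$ around $x$, has an order $\e^n$ coefficient depending only on $s_{(k)}$ with $k \leq n-1$, thanks to the overall prefactor $\e$. On the left at order $\e^n$, the only terms carrying $s_{(n)}$ are $\{s_{(n)}^i, x^j\}_{\omega_0} + \{x^i, s_{(n)}^j\}_{\omega_0}$; every other bracket pairs $s_{(k)}, s_{(l)}$ with $1 \leq k, l \leq n-1$. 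Taking the difference of the equations for $s_\e$ and $s'_\e$ and invoking the induction hypothesis, all non-$\Delta_n$ contributions cancel, leaving
\[
\{\Delta_n^i, x^j\}_{\omega_0} + \{x^i, \Delta_n^j\}_{\omega_0} = 0.
\]
Since $\{x^i, f\}_{\omega_0}$ equals $\partial f / \partial p_i$ up to a sign, this amounts to the symmetry relation $\partial \Delta_n^i / \partial p_j = \partial \Delta_n^j / \partial p_i$.

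To conclude, I would exploit the two extra hypotheses. The vanishing condition \eqref{eq:vanish} descends to $\Delta_n^i\, p_i = 0$; differentiating with respect to $p_j$ gives
\[
\Delta_n^j + p_i \frac{\partial \Delta_n^i}{\partial p_j} = 0,
\]
and substituting the symmetry just established rewrites the second summand as $p_i\, \partial \Delta_n^j / \partial p_i$. By the homogeneity \eqref{eq:homog}, $\Delta_n^j$ is of degree $n$ in $p$, so Euler's identity yields $p_i\, \partial \Delta_n^j / \partial p_i = n\, \Delta_n^j$. Hence $(n+1)\Delta_n^j = 0$, so $\Delta_n = 0$ and the induction closes.

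The only step requiring care is the bookkeeping verification that, at order $\e^n$, equation \eqref{eq: Lie system} is linear and homogeneous in the unknown $s_{(n)}$, with its $s_{(n)}$-contribution being exactly $\{s_{(n)}^i, x^j\} + \{x^i, s_{(n)}^j\}$. The crucial feature making this work is the explicit prefactor $\e$ on the right-hand side, which shifts the Taylor expansion of $\pi^{ij}(s_\e)$ down by one order and thereby prevents $s_{(n)}$ from appearing there. I do not anticipate any deeper obstacle beyond this verification.
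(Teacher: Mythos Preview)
Your proof is correct and follows the same overall strategy as the paper's: induct on $n$, isolate the order-$\e^{n}$ part of \eqref{eq: Lie system}, and observe that the difference $\Delta_{n}=s_{(n)}-s'_{(n)}$ satisfies the closedness condition $\partial_{p_{j}}\Delta_{n}^{i}=\partial_{p_{i}}\Delta_{n}^{j}$.

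The finishing step, however, is different. The paper invokes the Poincar\'e lemma in the $p$-variables to write $\Delta_{n}=\nabla_{p}f_{n}$ for some function $f_{n}$, and then uses \eqref{eq:vanish} to show that $\frac{d}{d\lambda}f_{n}(\lambda p,x)=\langle\Delta_{n}(\lambda p,x),p\rangle=0$, whence $f_{n}$ is independent of $p$ and $\Delta_{n}=0$. You instead differentiate the relation $\langle\Delta_{n},p\rangle=0$ in $p_{j}$, substitute the symmetry of mixed partials, and close with Euler's identity from \eqref{eq:homog} to obtain $(n+1)\Delta_{n}^{j}=0$. Your route is slightly more elementary in that it avoids the Poincar\'e lemma, at the cost of making essential use of the homogeneity hypothesis \eqref{eq:homog}; the paper's argument, as written, really only relies on \eqref{eq:vanish} (homogeneity is cited but not actually needed once one has the potential $f_{n}$). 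Either way the induction closes cleanly.
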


As shown in \cite{UnivGen}, $s_{K}$ satisfies the hypothesis of
this Lemma. As for $\alpha_{\overline{V},\epsilon}$, writing it as \eqref{eq:alpha} and using equation \eqref{eq: recur}, we get that the first term of $\alpha_{\overline{V},\epsilon}$ is $\frac{1}{2}\pi^{vi}p_{v}$,
which is the same as for the Kontsevich realization. By inspection
of the recursive formula \eqref{eq: recur}, we see that each term of $\langle\alpha_{(N)}(x,p),p\rangle$
has a factor of the form 
\[
\cdots\partial\cdots\partial L_{\overline{V}}(x^{i})p_{i}=\cdots\partial\cdots\partial\pi^{vi}p_{v}p_{i}
\]
which is zero by the antisymmetry of the Poisson structure. This shows
that \eqref{eq:vanish} holds. An induction on $N$ using the same
equation immediately yields \eqref{eq:homog}. 

\begin{thm}
\label{thm:realizations}Let $\alpha_{\overline{V},\e}:(T^{*}\R^{n},\omega_{0})\to(\R^{n},\e\pi)$
be the formal Karasev realization defined by the spray $\overline{V}$ in eq. \eqref{eq: H1}. 
This realization coincides with the Kontsevich realization $s_{K}$.
\end{thm}

%

%
%

\subsection{Kontsevich graphs from trees and recursive formula for the weights}

Not all of the graphs in $T_{n,2}$ contribute to $s_K$. Here we identify which of them do and compute the associated weights, re-obtaining the iterated integrals \eqref{eq:Irec} from a different perspective.

To each rooted tree $t\in RT$ one can associate $\Gamma_{t}\in T_{n,2}$ being the Kontsevich tree whose interior is $t$ and such that all of its terrestrial edges land on $\bar{1}$ except for the root of
$t$, which has its first edge landing on $\bar{1}$ and its second
landing on $\bar{2}$. All the edges landing on $\bar{1}$ are considered
to be the first ones. (See Appendix \ref{app:Kgraphs} for details.)
It is not hard to check that the above assignment descends to topological trees so that
\[
W_{\bar{t}}:=W_{\Gamma_{t}},\quad t\in\bar{t}, \ \bar{t}\in [RT],
\]
is well defined. 
We have thus defined Kontsevich weights for (topological) rooted trees which we now proceed to compute.

\begin{prop}
\label{prop:weights}Let $t\in[RT]$ and $W_{t}$ be the corresponding
Kontsevich weight. Then 
\[
W_{t}=I_{t}(0),
\]
where the functions $I_{t}(\theta)$ were recursively defined in eq. \eqref{eq:Irec}.
 \end{prop}
\begin{proof}
We proceed by induction on the tree degree, following the strategy
outlined by Kathotia in \cite{Kathotia}. For one vertex, we have
$I_{\bullet}(0)=\frac{1}{2}$, which is the Kontsevich weight of $\Gamma_{\bullet}$.
Suppose this is true for topological rooted trees with $n$ vertices.
Let $t=[t_{1,}\dots,t_{m}]$ such that $|t_{i}|<n$ for $i=1,\dots,m$.
Consider the graph $\Gamma_{t}(\theta)$ made out of $\Gamma_{t}$
by letting the root second edge point in the air toward an additional
fixed vertex $z$ at distance $1$ and angle $\theta$ from $\bar{1}$
as in the following figure:

\begin{figure}[H]
\begin{centering}
\includegraphics[scale=0.3]{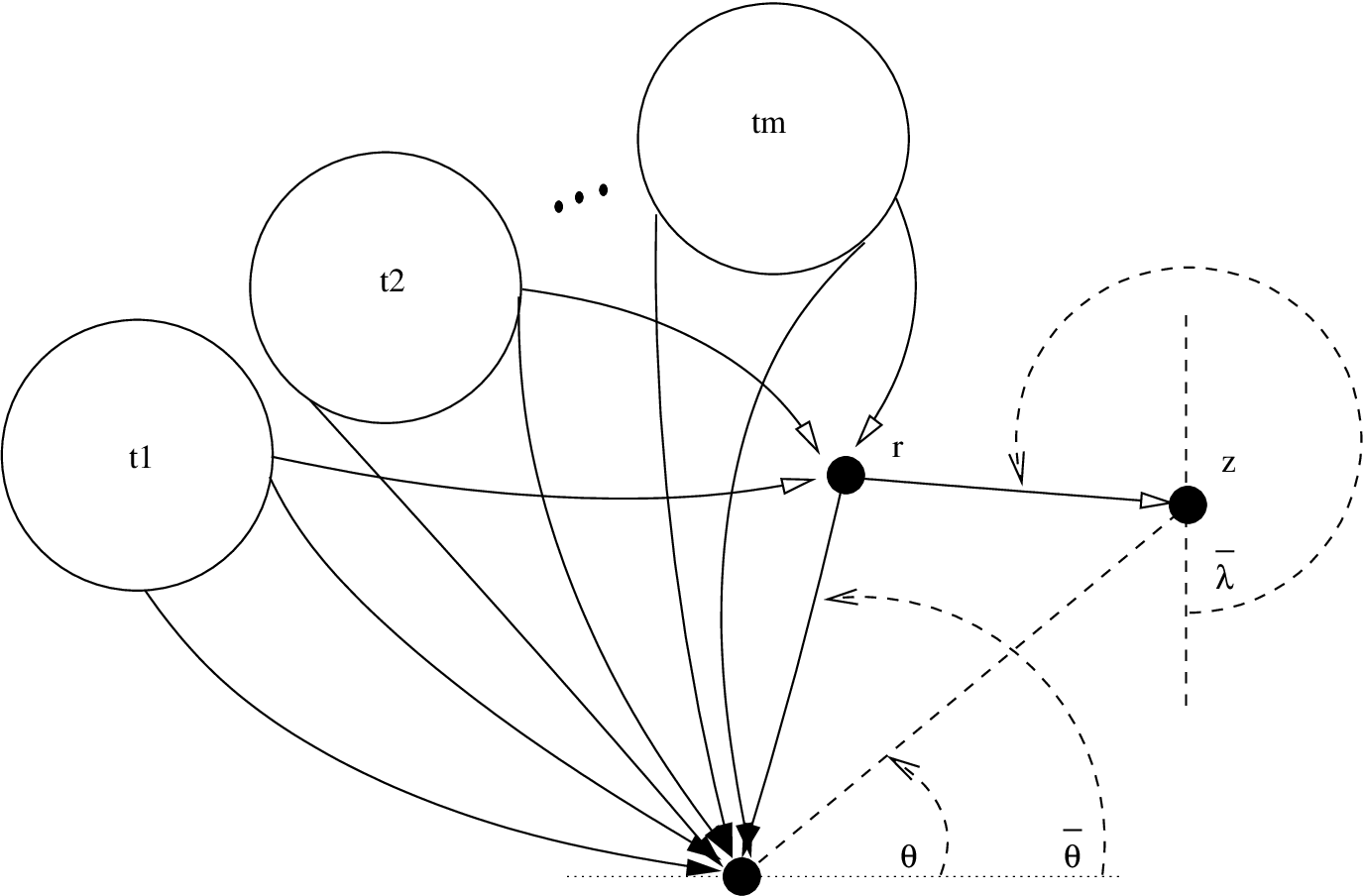} 
\par\end{centering}

\caption{\label{fig:Formual}}
\end{figure}

Let $\omega_{\Gamma_{t}(\theta)}$ be the angle-form defined by the
angle subtended by the aerial vertices of $\Gamma_{t}(\theta)$ minus
the additional vertex $z$. We define 
\[
W_{\Gamma_{t}}(\theta)=\int_{\bar{M}_{|t|}}\omega_{\Gamma_{t}(\theta)},
\]
where the integration is over the configuration space of the $|t|$
aerial vertices of $\Gamma_{t}$. Clearly, $W_{t}=W_{\Gamma_{t}(\theta)}(0)$.
Denote by $\bar{\theta}$ and $\bar{\lambda}$ the angle subtended
by the root $r$ of $t$ in $\Gamma_{t}(\theta)$ at $\bar{1}$ and
$z$ as in Figure \ref{fig:Formual}. By Fubini, we have that 
\[
W_{\Gamma_{t}}(\theta)=\int_{0}^{1}d\bar{\lambda}\int_{\theta}^{\bar{\lambda}}d\bar{\theta}\;\int_{\overline{M}_{|t|-1}}\omega_{\Gamma_{t}(\theta)-z},
\]
where $\omega_{\Gamma_{t}(\theta)-z}$ is the angle-form subtended
by the aerial vertices of $\Gamma_{t}(\theta)$ minus $z$. Because
there is no aerial arrow between the aerial vertices of $t_{i}$ and
$t_{j}$ if $i\neq j$, we have that 
\begin{eqnarray*}
(\omega_{\Gamma_{t}(\theta)-z})_{|r=(\bar{\theta},\bar{\lambda})} & = & \omega_{\Gamma_{t_{1}}(\bar{\theta})}\wedge\cdots\wedge\omega_{\Gamma_{t_{m}}(\bar{\theta})},
\end{eqnarray*}
and by Fubini, we obtain 
\begin{eqnarray*}
W_{\Gamma_{t}}(\theta) & = & \int_{0}^{1}d\bar{\lambda}\int_{\theta}^{\bar{\lambda}}d\bar{\theta}\;\left(\int_{\overline{M}_{|t_{1}|}}\omega_{\Gamma_{t_{1}}(\bar{\theta})}\right)\cdots\left(\int_{\overline{M}_{t_{m}}}\omega_{\Gamma_{t_{m}}(\theta)}\right),\\
 & = & \int_{0}^{1}d\bar{\lambda}\int_{\theta}^{\bar{\lambda}}d\bar{\theta}\; W_{\Gamma_{t_{1}}}(\bar{\theta})\cdots W_{\Gamma_{t_{m}}}(\bar{\theta}).
\end{eqnarray*}
In \cite{Kathotia}, Kathotia computed that $W_{\Gamma_{\bullet}}(\theta)=\frac{1}{2}-\theta$,
which allows us to conclude that $I_{t}(\theta)=W_{\Gamma_{t}}(\theta)$
and $I_{t}(0)=W_{\Gamma_{t}}$. 
\end{proof}

We thus get as a corollary of Thm. \ref{thm:iterformula} that the only graphs contributing to $s_K$ are those of the form $\Gamma_t$ for $t$ a rooted tree:
\begin{cor}
\label{thm:sKformula} $s_{K}$ is given by 
 \begin{equation}
 s_{K}^{i}(p,x)=x+\sum_{t\in[RT]}\frac{\e^{|t|}}{\sigma(t)} \ (-1)^{|t|} W_{t} \ D_{t}^{i}\overline{V}.\label{eq:KK-source}
 \end{equation}
 \end{cor}



%
%

\begin{appendix}

\section{Kontsevich's graphs, operators and weights}\label{app:Kgraphs}

A \textbf{Kontsevich graph} of type $(n,m)$ is a graph $(V,E)$ whose
vertex set is partitioned in two sets of vertices $V=V^{a}\sqcup V^{g}$,
the \textbf{aerial vertices} $V^{a}=\{1,\dots,n\}$ and the \textbf{terrestrial
vertices} $V^{g}=\{\bar{1},\dots,\bar{m}\}$ such that 
\begin{itemize}
\item all edges start from the set $V^{a}$, 
\item loops are not allowed, 
\item there are exactly two edges going out of a given vertex $k\in V^{a}$, 
\item the two edges going out of $k\in V^{a}$ are ordered, the first one
being denoted by $e_{k}^{1}$ and the second one by $e_{k}^{2}$. 
\end{itemize}
An \textbf{aerial edge} is an edge whose end vertex is aerial, and
a \textbf{terrestrial edge} is an edge whose end vertex is terrestrial.
We denote by $G_{n,m}$ the set of Kontsevich graphs of type $(n,m)$.
Figure \ref{Fig: K-2-3} illustrates a graphical way to represent
Kontsevich graphs.

Given a Poisson structure $\pi$ and a Kontsevich graph $\Gamma\in G_{n,m}$,
one can associate a $m$-differential operator $B_{\Gamma}(\pi)$
on $\R^{n}$ in the following way: For $f_{1},\dots,f_{m}\in C^{\infty}(\R^{n})$,
we define 
\[
B_{\Gamma}(\pi)(f_{1}\dots,f_{m}):=\sum_{I:E_{\Gamma}\rightarrow\{1,\dots,d\}}\big[\prod_{k\in V_{\Gamma}^{a}}(\prod_{\substack{e\in E_{\Gamma}\\
e=(*,k)
}
}\partial_{I(e)})\pi^{I(e_{k}^{1})I(e_{k}^{2})}\big]\prod_{i\in V_{\Gamma}^{g}}\big(\prod_{\substack{e\in E_{\Gamma}\\
e=(*,i)
}
}\partial_{I(e)}\big)f_{i}.
\]
 The symbol $\hat{B}_{\Gamma}$ of $B_{\Gamma}$ is defined by 
\[
B_{\Gamma}(e^{p_{1}x},\dots,e^{p_{m}x})=\hat{B}_{\Gamma}(p_{1},\dots,p_{m})e^{(p_{1}+\cdot+p_{m})x}.
\]
 The following figure illustrates this.

\begin{figure}[H]
\begin{centering}
\includegraphics[scale=0.4]{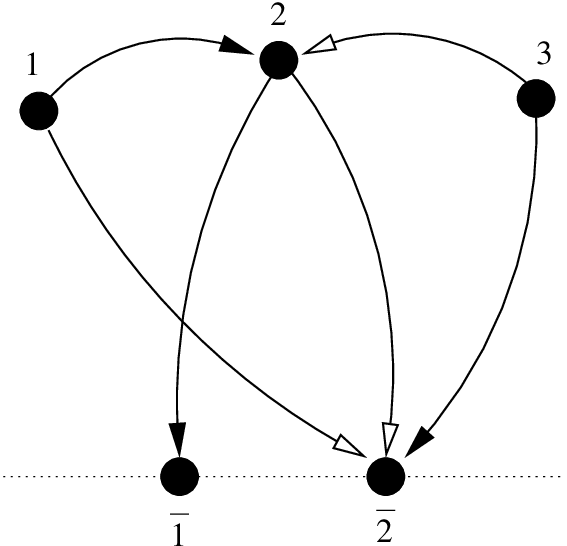} 
\par\end{centering}

\centering{}\caption{\label{Fig: K-2-3}A Kontsevich graph $\Gamma$ of type $(n,2)$.
The terrestrial vertices are on an imaginary line (the dotted line)
and the aerial vertices are placed above them. The first arrow stemming
out of an aerial vertex has a solid black head, while the second one
has a hollow white one. }
\end{figure}

\begin{example}
Consider the Kontsevich graph $\Gamma$ as in Figure \ref{Fig: K-2-3}.
The three aerial vertices $1,2$ and $3$ correspond to three copies
of the Poisson structure, say $\pi^{ij}$, $\pi^{kl}$ and $\pi^{mn}$.
The two terrestrial vertices $\bar{1}$ and $\bar{2}$ correspond
to two functions smooth functions on $\R^{d}$, say $f$ and $g$.
The arrows correspond to derivatives. Therefore, the Kontsevich operators
associated with this graph is 
\[
B_{\Gamma}(\pi)(f,g)=\pi^{ij}\partial_{n}\partial_{i}\pi^{kl}\pi^{mn}\partial_{k}f\partial_{m}\partial_{l}\partial_{j}g,
\]
where we use the Einstein summation convention of repeated indices.
The corresponding symbol is obtained by replacing $\partial_{i}f$
by $p_{i}^{1}$ and $\partial_{j}g$ by $p_{j}^{2}$: namely, 
\[
\hat{B}_{\Gamma}(\pi)(p_{1},p_{2},x)=\pi^{ij}\partial_{n}\partial_{i}\pi^{kl}\pi^{mn}p_{k}^{1}p_{m}^{2}p_{l}^{2}p_{j}^{2}.
\]
The order of the arrow is important because flipping, for example,
the order of the first aerial vertex order would introduce a sign,
since $\pi^{ij}=-\pi^{ji}$. 
\end{example}
$T_{n,2}$ is a subset of $G_{n,2}$ that we now define:
\begin{defn}
Let $\Gamma\in G_{m,n}$ be a Kontsevich graph. The \textbf{interior}
of $\Gamma$ is the graph $\Gamma_{i}$ obtained from $\Gamma$ by
removing all terrestrial vertices and terrestrial edges. A Kontsevich
graph is a \textbf{Kontsevich tree} if its interior is a tree in the
usual sense (i.e. it has no cycles). We denote by $T_{n,m}$ the set
of Kontsevich's trees of type $(n,m)$. 
\end{defn}
We give now a very rough presentation of the Kontsevich weights, which
follows Kathotia's conventions in \cite{Kathotia}, instead of the
original ones of Kontsevich in \cite{Kontsevich}. Let $\mathcal{H}$
be the upper half plane and $\R_{x}$ be the $x$-coordinate line
in $\mathcal{H}$. Given two points $p$ and $q$ in $\mathcal{H}$
with $p\notin\R_{x}$ , we define the \textbf{angle function} $\phi(p,q)\in[0,1)$,
which depends on whether $q\in\R_{x}$ or not ($\phi(p,q)=\theta$
if $q\in\R_{x}$ and $\phi(p,q)=\lambda$ if $q\notin\R_{x}$), as
depicted in Figure \ref{fig:angles} (we refer the reader to \cite{Kathotia,Kontsevich}
for more details on the angles):

\begin{figure}[H]
\begin{centering}
\includegraphics[scale=0.5]{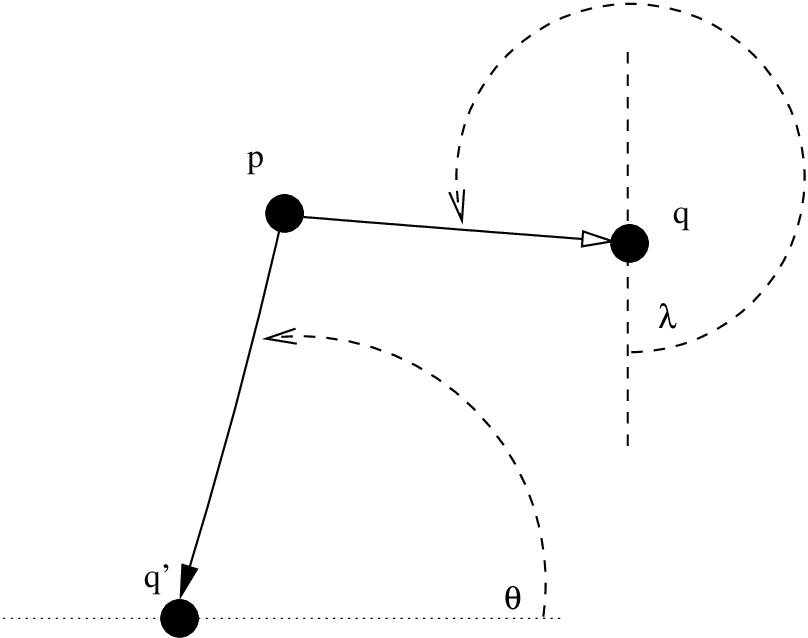} 
\par\end{centering}

\caption{\label{fig:angles}}
\end{figure}

Observe that, for fixed $q$ and $q'$ as in Figure \ref{fig:angles},
the angles $\theta$ and $\lambda$ determine completely the position
of the point $p$.

A \textbf{configuration} $z=(z_{1},\cdots,z_{n})$ with $z_{i}\in\mathcal{H}$
of a Kontsevich's graph $\Gamma\in G_{n,2}$ is an identification
$i\mapsto z_{i}$ of its aerial vertices $\{1,\dots,n\}$ with the
points of the $n$-tuple $z$ and of its terrestrial vertices $\bar{1}$
and $\bar{2}$ with respectively $0$ and $1$. We denote by $M_{n}$
the manifold of all the configurations of $\Gamma$; its compactification
$\bar{M}_{n}$ is a manifold with corners (see \cite{Kontsevich}
for details).

Given $\Gamma\in G_{n,2}$, we have $n$ pairs of angle-functions
on $\bar{M}_{n}$, which we denote by $\phi_{1}^{1},\phi_{1}^{2},\dots,\phi_{n}^{1},\phi_{n}^{2}$.
They are determined by the configuration of the aerial vertices of
$\Gamma$: Namely, $\phi_{k}^{i}=\phi(z_{k},z_{\gamma^{i}(k)})$ is
the angle-function as above, where $\gamma^{i}(k)$ is the vertex
target of $e_{k}^{i}$ (the $i^{th}$ edge of $k$). This allows us
to define the angle-form on $\bar{M}_{n}$ associated with $\Gamma$:
\[
\omega_{\Gamma}=\bigwedge_{k=1}^{n}d\phi_{k}^{1}\wedge d\phi_{k}^{2}.
\]
Since $\omega_{\Gamma}$ is a $2n$-form and $\bar{M}_{n}$ is a compact
real $2n$-manifold, we can integrate $\omega_{\Gamma}$, which give
us the Kontsevich weight of $\Gamma$: 
\[
W_{\Gamma}=\int_{\bar{M}_{n}}\omega_{\Gamma}.
\]

\begin{rem}
Following Kathotia, we do not include the original factor $\frac{1}{n!}$
in the definition of the weights, and the factor $\frac{1}{(2\pi)^{n}}$
is taken into account by the rescaling of the angles (we use $[0,1)$
instead of $[0,2\pi)$).
\end{rem}

\end{appendix}

\end{document}